\numberwithin{equation}{section}
\newtheorem{remark}{Remark}[section]
\newtheorem{theorem}{Theorem}[section]
\newtheorem{corollary}[theorem]{Corollary}
\newtheorem{example}[theorem]{Example}
\def\be{\begin{equation}}
\def\ee{\end{equation}}
\newcommand{\SA}{\textcolor{black}}
\newcommand{\VM}{\textcolor{black}}
\title{On second order conditions for singular optimal control of port-Hamiltonian systems}
\author{M. Soledad Aronna \footnotemark[1]
\and Volker Mehrmann
\footnotemark[2]
}
\begin{document}

\date{}

\renewcommand{\thefootnote}{\fnsymbol{footnote}}
\footnotetext[2]{
Institut f\"ur Mathematik, MA 4-5, TU Berlin, Str. des 17. Juni 136,
D-10623 Berlin, FRG.
\texttt{mehrmann@math.tu-berlin.de}.
}

\footnotetext[1]{
Escola de Matemática Aplicada FGV EMAp, Praia de Botafogo, 190, 22250-900 Rio de Janeiro, Brazil. 
   \texttt{soledad.aronna@fgv.br}.}
\renewcommand{\thefootnote}{\arabic{footnote}}
\maketitle



\begin{abstract}                
We study nonlinear singular optimal  control problems of port-Hamil-tonian (descriptor) systems. We employ general control-affine cost functionals that include as a special case the energy supplied to the system. We first derive optimality conditions for the case of ordinary differential equations with and without control bounds by applying the general theory to the specially structured port-Hamiltonian case, and show that this leads to elegant optimality conditions, in particular in the linear case. We then extend these results to classes of nonlinear  port-Hamiltonian descriptor systems.
\end{abstract}

\section{Introduction}\label{sec:intro}
The energy-based modeling of physical systems employing the model class of port-Hamiltonian (descriptor) systems, see e.g. \cite{MehU23,SchaJelt2014}, has become an accepted paradigm  which has been successfully used in a  multitude of applications from a wide variety of practical domains: mechanics \cite{brugnoli2019por1t,brugnoli2019port2,macchelli2007port,siuka2011port},  electrical engineering \cite{reis2021analysis,schoberl2008modelling}, thermodynamics and fluid dynamics \cite{Altmann2017,hoang2011port,rashad2021port,rashad2021port2}, economics \cite{macchelli2014towards}. See \cite{duindam2009modeling,Jacob2012,SchaJelt2014} and the recent survey \cite{MehU23}. 

In this paper we consider  
a simplified form of  real  nonlinear port-Ha\-mil\-to\-ni\-an (pH) descriptor systems, see \cite{MehM19_CDC,MehU23}, that have a governing equation
\begin{equation}
\label{stateeq}
E\dot x = \big(J(x) - R(x) \big)Qx + G(x) u(t),
\end{equation}
with an associated energy function  
\begin{equation}\label{hamil}
\mathcal E: \mathbb{R}^n \to \mathbb R,\quad \ x\mapsto \frac12 x^TE^TQx,
\end{equation}
(often called \emph{Hamiltonian} or {\em storage function}). 
As is common in pH (descriptor) systems, the structure of the coefficients is the following:
\begin{itemize}
\item $J:\mathbb{R}^n \to \mathbb R^{n,n}$ is skew-symmetric;
\item $R:\mathbb{R}^n \to \mathbb R^{n,n}$ is symmetric and positive semidefinite;
\item $E,Q \in \mathbb R^{n,n}$ are such that $E^TQ$ is symmetric and positive semidefinite and represents the Hessian of 
$\mathcal E(x) =\frac 12 x^T E^TQx$, i.e., 
$\frac{\partial \mathcal E}{\partial x}(x)=E^TQ x$;
\item $G : \mathbb{R}^n \to \mathbb R^{n,m}$.
\end{itemize}
\SA{For \eqref{stateeq}-\eqref{hamil}, } the {\em output} $y$ is given by
\begin{equation}
\label{output}
y = G^T(x) Q x.
\end{equation}
Note that if $E$ is invertible, then one obtains the classical form of a pH system by a change of variables (see \cite{SchaJelt2014}).

The success of modeling within the class of pH (descriptor) systems is due to its many important properties which include the invariance of the class under power-conserving interconnection, which greatly simplifies modularized automated modeling, and the invariance under Galerkin projection which makes the class very suitable for discretization and model reduction, see \SA{\cite{CarHLMR24,RasCSS20}}. Physical properties like \emph{energy dissipation, Lyapunov stability and passivity} are encoded in the  structure of the equations leading directly to  another key property of port-Hamiltonian (descriptor) systems, the \emph{power balance equation} and the resulting \emph{dissipation inequality}, 
see e.g. \cite{MehM19_CDC,MehU23} or \cite{SchaJelt2014} for the  case of ordinary pH systems.
%
  %
  %
  %
From a   physics point of view, pH systems  model the interaction of three types of energy/power. The \emph{stored energy} (in the energy storing components) is represented by the nonnegative quadratic form $\mathcal E(x)$, the \emph{dissipated energy}  by the nonnegative quadratic form  
$ \mathcal D(x)=-       x^T Q^T R Q x$
and the \emph{supplied energy} by the form
$\mathcal S(y,u)= y^T u$.

\begin{example}\label{ex:mbs}{\rm 
A classical example of pH modeling  are (dissipative) Hamiltonian equations of motion, see e.g. \cite{AbrM08,SchaJelt2014}, which, in first order descriptor representation u\-sing the position coordinate $q$ and velocity coordinate $\tilde p$ 
(replacing the usual linear momentum $p=M\dot q$),
together with a force term $\tilde B(q) u$, take the
descriptor form
\begin{equation}\label{daemechsys}
\begin{bmatrix} 
M & 0 \\ 0 &I\end{bmatrix}
\begin{bmatrix}
    \dot {\tilde p} \\ \dot {q}
\end{bmatrix}=
\begin{bmatrix}
-D(p,q)  & -I \\ I & 0
\end{bmatrix}
\begin{bmatrix}
I & 0 \\ 0 & K
\end{bmatrix}
\begin{bmatrix}
    \tilde p \\ q
\end{bmatrix}
+ \begin{bmatrix}  \tilde B(q)\\ 0 
\end{bmatrix}u
\end{equation}
coupled with a collocated output 
\begin{equation}\label{daeoutmechsys}
y=
\begin{bmatrix}
\tilde B(q)^T & 0 
\end{bmatrix}\begin{bmatrix}
I & 0 \\ 0 & K
\end{bmatrix}\begin{bmatrix}
    \tilde p \\ q
\end{bmatrix}.
\end{equation}
%
%
%
Here the energy function (Hamiltonian) is 
\[
\mathcal E(q,\tilde p) =\frac 12 {\tilde p}^TM \tilde p+ \frac 12 q^T K q,
\]
i.e., kinetic plus potential energy, where $M$ is a positive (semi-)definite mass matrix, $K$ is a positive semidefinite stiffness matrix, and  $D(q,p)=D(q,p)^T$ is a positive semidefinite coefficient matrix that models internal damping.
This representation has the advantage that small masses can be set to $0$, leading to a singular mass matrix $M$. See \cite{MehS23} for a detailed analysis of  different representations and transformations between the different formulations.
A typical optimal control application is arising in the control of the system into an equilibrium state, e.g. in the stabilization of buildings excited by earthquakes, see e.g. \cite{MahZS00,SchZBSTW24}.}
\end{example}

With an increased understanding of the many advantageous properties of modeling with pH (descriptor) systems, in recent years also feedback control and optimal control problems for this model class have become an important research topic. 
\VM{While the theoretical analysis from different perspectives is studied in \cite{FauKMPSW25,FauMPSW22,wu2020reduced}, and we will come back to this later, 
the use in  different application fields is studied, e.g.,  in dynamic network flow problems \cite{DogKLST23}, learning control \cite{KoeSSH21,NagLJB15}, electrothermal microgrids \cite{KriS21}, and thermodynamics \cite{Masc22}.}

In the  
 optimal control of  pH (descriptor) systems,  a natural \emph{cost function} is the integral of the supplied energy 
 {\small
\[
\mathcal{J}(x, u) = 
\int_{t_0}^{T} \mathcal{S}(y(\tau), u(\tau)) \, d\tau 
= \int_{t_0}^{T} x(\tau)^T Q^T G(x) u(\tau) \, d\tau.
\]
}
It is readily available as a mathematical expression in terms of the ports  and it is interesting from an application point of view to employ it as the objective to be minimized in optimal control problems (OCPs) \cite{FauKMPSW25,FauMPSW22,lhmnc}, \SA{where as cost function one typically uses 
\[
\tilde J(x,u) := \frac 12
\begin{bmatrix}
   x^T &u^T 
\end{bmatrix}\begin{bmatrix}\tilde Q & \tilde S \\
\tilde S^T &\tilde R \end{bmatrix} \begin{bmatrix}
    x\\ u
\end{bmatrix}.
\]
with semidefinite $\begin{bmatrix}\tilde Q & \tilde S \\
\tilde S^T &\tilde R \end{bmatrix}$.
While it is in general no problem to drop the quadratic term in $x$, i.e. by setting $\tilde Q=0$ \cite{Meh91}, it is usually not possible to set $\tilde R=0$, as is done when the supplied energy is used as cost function. In this case  the resulting OCP is typically singular,} since it is missing the (commonly used) positive quadratic  regularization term in $u$, and standard solution techniques, for instance the construction of a Riccati state feedback, see e.g. \cite{KunM11a,KunM24,Meh91}, are not directly applicable.  
In view of this difficulty, the topic of this paper is the analysis of singular  OCPs with constraints given by pH (descriptor) systems without the regularization approach, \SA{i.e. without the term $u^T\tilde Ru$ in the cost function. }

The paper is organized as follows. 
In Section~\ref{SecGohLC} we recall second order optimality conditions \SA{for general control-affine problems.
In Section~\ref{sec:optconph},} we analyze how these optimality conditions look when we specialize the constraint function from general control-affine ordinary differential equations to ordinary port-Hamiltonian  systems. We show that the structure helps to obtain much more elegant conditions. We extend these results to port-Hamiltonian descriptor systems in Section~\ref{sec:deslin} and we present some conclusions and directions of future work in Section~\ref{sec:conclusion}.

\SA{\section{On Goh and generalized Legendre-Clebsch conditions}
}
\label{SecGohLC}

We obtain our results by applying optimality conditions for singular optimal control 
of control-affine systems\SA{, known as {\it Goh} and {\it generalized Legendre-Clebsch conditions, that we briefly introduce in the sequel. We consider}} problems of the form
\begin{align}
    \min  & \quad \Psi(x(T)) \label{finalcost} \\
    \mathrm{s.t.}  & \quad \dot x = f(x,u) = f_0(x) + \sum_{i=1}^m  f_i(x) u_i, \label{generalcontrolaffine} \\
                   & \quad x(0) = x_0. \label{initialcondition}
\end{align}
with $f_i: \mathbb{R}^n \to  \mathbb R^n$, for $i=0\ldots,m$ and $\Psi : \mathbb{R}^n \to \mathbb R.$
\SA{
The {\em state} $x$ belongs to the state space $\mathcal{X} := W^{1,\infty}([0,T];\mathbb{R}^n)$ (the Sobolev space of functions in $L^\infty([0,T];\mathbb{R}^m),$ with weak first order derivatives that have a finite $L^\infty$-norm, with $L^\infty$ denoting the usual Lebesgue space) and the {\em input (control)} $u = (u_1,\dots,u_m)^T$ ranges in the control space $\mathcal{U} := L^\infty([0,T];\mathbb{R}^m)$.}
At this point we recall that any problem with a general control-affine integral cost can be rewritten in the Mayer form \eqref{finalcost}-\eqref{initialcondition} by adding an extra scalar state variable (see \eqref{nonlinearpHproblem} below). Therefore, the formulation \eqref{finalcost}-\eqref{initialcondition}
covers problems with integral-type costs of the form
$
\mathcal J\coloneqq \int_0^T \ell_0(x) + \sum_{i=1}^m \ell_i (x) u_i\, dt,
$
where $\ell_i: \mathbb{R}^n \to \mathbb{R}$ for $i=0\ldots,m$.

\SA{
For nonlinear systems as the one in \eqref{generalcontrolaffine}, one cannot obtain the control straightforwardly from Pontryagin's Maximum Principle (PMP).  More precisely, the maximum condition provided by the PMP results in a maximization with respect to the control variable of the {\it Hamilton} function, which is itself a control-affine function
\begin{equation}
\label{Haffine}
\mathcal{H} = 
\lambda^T f = \lambda^T ( f_0(x) + \sum_{i=1}^m  f_i(x) u_i ),
\end{equation}
$\lambda$ here being the {\it adjoint state,} whose associated {\em adjoint equation} is given by
\begin{equation}
\label{adjointeq}
    -\dot{\lambda}^T = \partial_x \mathcal{H} = \lambda^T \partial_x f,\quad \lambda(T) = \nabla \Psi(x(T)).
\end{equation}
Consequently, whenever 
 \begin{equation}\label{Hu0}
     \mathcal{H}_{u_i} = 
     \lambda^T f_i = 0,
 \end{equation}
holds along a non-trivial interval of time for some $i\in \{1,\dots,m\},$ no information about the optimal control can be retrieved from \eqref{Haffine}. When this situation occurs along an optimal trajectory, it is said that the corresponding control has a {\it singular arc.} Briefly speaking, singular arcs of a control component $u_i$ correspond to intervals in which $\frac{\partial \mathcal{H}}{\partial u_i}$ vanishes and the corresponding Hessian matrix $\frac{\partial^2 \mathcal{H}}{\partial u^2}$ is singular (which holds trivially in the current control-affine framework).
In the absence of control constraints, \eqref{Hu0} results in a necessary condition for optimality.
Since \eqref{Hu0} does not depend explicitly on the control variable, one can differentiate with respect to time and obtains
  \begin{equation}
  \label{dotHu0}
0= \frac{d}{dt} \mathcal{H}_{u_i} 
= \dot \lambda^T f_i + \lambda^\top f_i' \dot x = \lambda^T \big( [f_0,f_i] + \sum_{j=1}^m u_j  [ f_j, f_i] \big),
 \end{equation}
where we used the state equation \eqref{generalcontrolaffine} and the adjoint equation \eqref{adjointeq} to replace $\dot x$ and $\dot \lambda,$ and where $f_i'$ denotes the Jacobian, i.e., the derivative of $ f_i$ with respect to $x$, and for any pair of smooth vector fields $h,k \colon \mathbb{R}^{n} \to \mathbb{R}^{n}$,  $[h,k]:=k'h-h'k$ is their {\em Lie bracket}, with $k',h'$ denoting the Jacobians of $k,h$.
A remarkable property, discovered by Goh \cite{Goh66} (and further explored by Robbins \cite{Rob67}), is that the coefficients of the control in the first time derivative \eqref{dotHu0} of $\mathcal{H}_{u_i}$ vanish. More precisely, {\em Goh conditions} state the following, see e.g. \cite{ABDL12,FraTon13,schattler2012geometric} for rigorous proofs.
\begin{theorem}[Goh condition]
\label{ThGoh}
For the optimal control problem  \eqref{finalcost}-\eqref{initialcondition}, the
\emph{Goh condition}
\begin{equation}
\label{Gohgeneral}
\lambda^T[ f_i, f_j] = 0, \quad \text{for } i,j =1,\dots,m
\end{equation}
holds along optimal trajectories.
\end{theorem}
}

\SA{\begin{remark}
    \SA{At this point it is worth recalling that the traditional technique of adding a regularizing term of the form $\alpha \sum_{i=1}^m u_i^2$ with positive $\alpha$ and then taking $\alpha \to 0$ does not work in this context, since \eqref{Hu0} will depend on $u$ and then the expression \eqref{dotHu0} would not be possible. On the other hand, if one aims at obtaining a feedback formula for the controls on singular arcs by applying Pontryagin's Maximum Principle to the modified quadratic cost and taking $\alpha \to 0,$ the expression contains $\alpha$ in the denominator and the limit may fail to exist, see, e.g. \cite{JacGL70}. 
Despite this difficulty, it has been shown in the case of linear constant coefficients that this problem has some hidden regularity in a subspace \cite{FauKMPSW25}.}
\end{remark}}

\SA{The Goh property \eqref{Gohgeneral} guarantees then that \eqref{dotHu0}  does not explicitly contain the control variables, which allows one to differentiate the expression \eqref{dotHu0} once more. In the second order derivative $\frac{d^2}{dt^2} \mathcal{H}_{u},$ the matrix coefficient of the control variable turns out to be semidefinite.
Indeed, one gets
\begin{equation}
    \label{ddotHu}
\frac{d^2}{dt^2} \mathcal{H}_{u_i} =
\lambda^T [f_0,[f_0,f_i]] + \sum_{j=1}^m u_j \lambda^T [f_j,[f_0,f_i]].
\end{equation}
Thus, from $\frac{d^2}{dt^2} \mathcal{H}_{u_i} = 0$ one obtains a linear system for $u$ given by \begin{equation}
\label{ufeedback}
Wu + d = 0,
\end{equation}
with the \emph{Hessian matrix} $W=[w_{ij}]$ and the right hand side vector
$d$, where for  $i,j=1,\dots,m$,
\begin{equation}
\label{Wd}
w_{ij} \coloneqq \lambda^T [f_j,[f_0,f_i]],\qquad 
d_i \coloneqq \lambda^T [f_0,[f_0,f_i]].    
\end{equation}
This leads to a \emph{generalized Legendre-Clebsch condition}, see
e.g. \cite[Theorem 6.2 and/or Corollary 6.3]{Krener1977}, \cite{ABDL12}, or  \cite{FraTon13}.
\begin{theorem}[Generalized Legendre-Clebsch condition]
\label{thm:genLC}
For the nonlinear control-affine problem \eqref{finalcost}-\eqref{initialcondition}, the associated Hessian matrix $W$ in \eqref{Wd} is negative semidefinite
along optimal trajectories.
Additionally, if $W$ is actually negative definite,
then, from \eqref{ufeedback}, one can express the optimal control in \emph{feedback form}.
\end{theorem}
}

\SA{
\begin{remark}
    Note that, whenever $W$ is actually negative definite, the feedback obtained from \eqref{ufeedback} will depend on the state and adjoint variables.
\end{remark}
}

\SA{The extensions of Theorems \ref{ThGoh} and \ref{thm:genLC} to control-constrained problems are provided in the subsequent sections, in which we use Goh and the generalized Legendre-Clebsch conditions to deduce properties for pH systems.}


\section{Optimality conditions for nonlinear  pH  systems}\label{sec:optconph}
The optimality conditions for singular optimal control problems  \SA{in the last section} do not exploit the pH structure.  In this section, we therefore make explicit use of the nonlinear pH structure in~\eqref{stateeq} and generalize results of \cite{FauKMPSW25,Masc22,SchPFWM23}.
 
\subsection{Optimality conditions for ordinary nonlinear  pH systems}

We first consider the following optimal control problem governed by the nonlinear ordinary pH dynamics \eqref{stateeq} with $E=I$, 
minimizing a general control-affine integral cost
\begin{equation}
\label{nonlinearpHproblem}
\begin{aligned}
        \min\quad  & x_{n+1}(T)  \\
 \mathrm{s.t.} \quad  & \dot x = \big(J(x) - R(x) \big) \frac{\partial  \mathcal {E}}{\partial x} (x) + G(x) u(t) ,\\
 & \dot x_{n+1} = \ell_0(x) + \sum_{i=1}^m \ell_i (x) u_i, \\
 &  
 \begin{bmatrix}
     x \\ x_{n+1}
 \end{bmatrix}(t_0) = 
 \begin{bmatrix}
     x_0 \\ 0
 \end{bmatrix},
\end{aligned}
\end{equation}
with the assumption that the control $u$ takes values in an {\em open set} $\mathbb  U\subseteq \mathbb R^{m}$.
For the sake of simplicity in the presentation of the results, we introduce the abbreviations 
\begin{equation}
g_0 (x) \coloneqq   \big(J(x) - R(x) \big) Qx =  \big(J(x) - R(x) \big) \frac{\partial {\mathcal E}}{\partial x}(x),
\end{equation}
and,  setting
$z \coloneqq \begin{bmatrix}
     x \\ x_{n+1}
 \end{bmatrix}$
and writing $G = \begin{bmatrix}
    g_1\, \dots \, g_m
\end{bmatrix}$, 
we define
\begin{equation}
\label{deffi}
f_0(z)  \coloneqq \begin{bmatrix} g_0 (x)\\ \ell_0(x) \end{bmatrix};\qquad f_i (z) \coloneqq  \begin{bmatrix} g_i (x)\\ \ell_i(x) \end{bmatrix}.
\end{equation}
In this case, recalling the {\em Hamilton function} introduced in \eqref{Haffine}, the adjoint variable has an additional coordinate $\lambda_{n+1}$ and takes the form
$
\begin{bmatrix} \lambda \\ \lambda_{n+1} \end{bmatrix},
$
where $\lambda$ maps into $\mathbb{R}^n$ and $\lambda_{n+1}$ is scalar.
In the absence of (state or terminal) constraints, the multiplier $\lambda_{n+1}$ is unique and the associated extremal  is {\em normal,} i.e., the corresponding multiplier $\lambda_{n+1}$ can be set to $1$. 
Therefore, throughout the remainder of the article, we fix $\lambda_{n+1}=1$.

\SA{
\begin{remark}\label{rem:supplied}
{\rm Note that the use of the supplied energy as cost function in \eqref{nonlinearpHproblem} corresponds to the special case in which $\ell_0(x)=0$ and $\ell_i(x)=y_i(x)$, for $i=1,\dots,m$. We will come back to this choice in more detail in the following section.}
\end{remark}
}
\begin{corollary}[Goh conditions for nonlinear ordinary pH control]
\label{ThmGohpH}
    For the optimal control problem with  nonlinear pH constraint \eqref{nonlinearpHproblem}, it holds that
\begin{equation}\label{GohNLpH}
\left\{
\begin{array}{rl}
    \lambda^T [g_i,g_j] &=0,\\
        \ell_j' \, g_i - \ell_i' \, g_j & =0,
\end{array}
\right.
\qquad \text{for } \, i,j =1,\ldots, m,
\end{equation}
along optimal trajectories.
\end{corollary}
\begin{proof}
The result follows from  Theorem \ref{ThGoh}.
\end{proof}
Using Corollary~\ref{GohNLpH}, we compute the expressions for the {\em generalized Legendre-Clebsch conditions}
(see Theorem \ref{thm:genLC}, and $W$ and $d$ given in \eqref{Wd}) in the context of problem \eqref{finalcost}--\eqref{initialcondition}. We have
\begin{equation}
\label{nonpHbracket}
        [f_0,f_i] = 
        \begin{bmatrix}
            [g_0,g_i] \\
            \ell_i' g_0 - \ell_0' g_i
        \end{bmatrix},
\end{equation}
and 
\[
        [f_j,[f_0,f_i]] =  
        \begin{bmatrix}
            [g_j,[g_0,g_i]] \\
            \ell_i'' g_0 g_j + \ell_i' g_0' g_j - \ell_0'' g_i g_j - \ell_0' g_i' g_j - \ell_j' [g_0,g_j] 
        \end{bmatrix}.
\]
We set
\begin{multline}
\label{WpH}
     w_{ij} \coloneqq
    \lambda^T [g_j,[g_0,g_i]]
      + \Big( \ell_i'' g_0 g_j + \ell_i' g_0' g_j - \ell_0''g_i g_j - \ell_0'g_i'g_j - \ell_j'[g_0,g_i] \Big),   
\end{multline} 
      \begin{multline}
          d_{i} \coloneqq \lambda^T [g_0,[g_0,g_i]] + \Big( \ell_i'' g_0 g_0 + \ell_i' g_0' g_0 - \ell_0''g_i g_0 - \ell_0'g_i'g_0 - \ell_0'[g_0,g_i] \Big),
      \end{multline}
and obtain the following result.
\begin{corollary}[\SA{Generalized} Legendre-Clebsch condition for nonlinear ordinary pH control]\label{ThmLCpH}
    For the nonlinear pH optimal control problem \eqref{nonlinearpHproblem}, 
    the matrix $W=[w_{ij}],$ with $w_{ij}$ given in \eqref{WpH}, is negative semidefinite. 
    Additionally, whenever $W$ is negative definite over some control interval then, on that interval,
     one can write the optimal control in \emph{feedback form} from \eqref{ufeedback}.
\end{corollary}

\begin{proof}
The proof follows from Theorem \ref{thm:genLC}.
\end{proof}

\begin{remark}\label{rem:feedback}
 Note that when \( W \) is non-singular, the control \( u \) can be expressed in feedback form as a function of the state and costate variables. This representation is useful for establishing the regularity of the control and facilitates the application of numerical techniques, such as the shooting method.
\end{remark}

\SA{While the Goh and generalized Legendre-Clebsch conditions from Corollaries \ref{ThmGohpH} and \ref{ThmLCpH} do not offer much insight for general nonlinear pH systems, they simplify significantly, or even hold trivially, in specific examples where symmetry properties naturally arise. This simplification reveals an interesting novelty, which is exemplified in the general nonlinear model for dissipative Hamiltonian equations of motion, introduced above in Example \ref{ex:mbs}.}
\begin{example}\label{ex:lie}{\rm
    In Example \ref{ex:mbs}, with the associated cost of minimizing the energy supply 
    \[\int_0^T y^\top u \,dt = \int_0^T \frac{\partial \mathcal{E}}{\partial p}^\top B(q) u \,dt=\int_0^Tp^\top M^{-1}B(q)u\,dt
    \]
    the Lie brackets $[f_i,f_j]$ are given by
\begin{equation*}
        [f_i,f_j] 
        = \begin{bmatrix}
            0\\
            0\\
            B_j(q)^\top 
\frac{\partial^2 \mathcal{E}}{\partial p^2}
B_i(q) -B_i(q)^\top  \frac{\partial^2 \mathcal{E}}{\partial p^2} B_j(q)
\end{bmatrix}.
\end{equation*}
These vanish identically for any sufficiently regular $\mathcal E$. In the current framework, one has \SA{$\frac{\partial^2\mathcal{E}}{\partial p^2} = M^{-1},$} for $M$ being symmetric positive definite, so {\em Goh conditions \eqref{GohNLpH} hold trivially}. 
    One gets the following expression for the second-order Lie brackets
    \begin{multline*}
        [f_j,[f_0,f_i]] =
    \begin{bmatrix}
        B_i' M^{-1} B_j + B_j'M^{-1} B_i  \\
        0
    \end{bmatrix} \\
    +
    \begin{bmatrix}
         \frac{\partial D}{\partial p}(M^{-1} B_i) B_j + \frac{\partial^2 D}{\partial p^2}(M^{-1} p, B_i) B_j \\
        0
    \end{bmatrix},
    \end{multline*}
    which considerably simplifies under additional assumptions on $D$ or $B$, and may lead to very simple expressions for the singular controls.}
\end{example}

\if{
We have
\begin{equation}
[f_0,f_i] = 
\begin{pmatrix}
\left[ J\frac{\partial H}{\partial x}  ,G_i \right] \\
\nabla \ell_i  J \frac{\partial H}{\partial x} - \nabla \ell_0 G_i
\end{pmatrix}, 
\end{equation}
and we get the following expressions for $d$ and $W$:
\begin{multline}
\label{dnonlinearpH}
d_i =p^T \big[f_0,[f_0,f_i] \big] = \\
\begin{pmatrix}
\left[ J\frac{\partial H}{\partial x}  ,\left[ J\frac{\partial H}{\partial x}  ,G_i \right] \right] \\
 \frac{\partial^2 \ell_i}{\partial x^2}   (J \frac{\partial H}{\partial x})^2 
+\nabla\ell_i \left( J' \frac{\partial H}{\partial x}  + J \frac{\partial^2 H}{\partial x^2} \right) J \frac{\partial H}{\partial x} 
- \frac{\partial^2 \ell_0}{\partial x^2} G_i  J\frac{\partial H}{\partial x} 
  + \nabla\ell_0 \left( J' \frac{\partial H}{\partial x}  + J \frac{\partial^2 H}{\partial x^2} \right)G_i
\end{pmatrix}, 
\end{multline}
\begin{multline}
\label{WnonlinearpH}
W_{ij} = p^T \big[f_j,[f_0,f_i] \big] = \\
p^T
\begin{pmatrix}
 \left[ G_j  ,\left[ J\frac{\partial H}{\partial x}  ,G_i \right] \right] \\
 \left\{ \frac{\partial^2 \ell_i}{\partial x^2}   J \frac{\partial H}{\partial x} 
+\nabla\ell_i \left( J' \frac{\partial H}{\partial x}  + J \frac{\partial^2 H}{\partial x^2} \right) 
- \frac{\partial^2 \ell_0}{\partial x^2} G_i 
  - \nabla\ell_0 G_i' \right\} G_j - \nabla \ell_j [J \frac{\partial H}{\partial x} , G_i]
\end{pmatrix}.
\end{multline}
}\fi

\subsection{Optimality conditions for nonlinear ordinary pH systems with control bounds}

In this section we discuss problem \eqref{nonlinearpHproblem} in the presence of bounds on the control. This means that we no longer assume that $\mathbb{U}$ is open, but  consider a  set of admissible control values 
\begin{equation}
\label{U}
\mathbb{U} = [\underline{u}_1,\overline{u}_1] \times \dots [\underline{u}_m,\overline{u}_m], 
\end{equation}
with real numbers $\underline u_i \leq \overline u_i,$ for each $i=1,\dots,m$. 
\SA{In this case, the Goh optimality condition is restricted to the components that lie in the interior of the control set and one has the following optimality condition. }
%

\begin{theorem}[Goh condition for ordinary pH control under control constraints]
\label{PropGohnonlinearpHconstraints}
Consider the nonlinear pH optimal control problem \eqref{nonlinearpHproblem}  under the control constraint $u(t) \in \mathbb{U}$, for a.e. $t\in [0,T],$ with the set $\mathbb{U}$ as in \eqref{U}. 
Then, on any open interval over which an optimal control $u^*$ verifies
\[
\underline{u}_i < u^*_i(t) < \overline{u}_i \quad \text{ and } \quad \underline{u}_j < u^*_j(t) < \overline{u}_j,
\]
for some pair $i,j=1,\ldots,m,$ \SA{the Goh conditions \eqref{GohNLpH}} must necessarily hold.
\end{theorem}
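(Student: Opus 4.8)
The plan is to deduce the constrained statement from the unconstrained Goh conditions of Theorem~\ref{ThmGohpH} by a localization argument: on an interval where the two control components are strictly interior, the problem behaves, in those two directions, exactly as if the control set were open.

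First I would fix an open interval $I\subseteq[0,T]$ on which $\underline{u}_i<u^*_i(t)<\overline{u}_i$ and $\underline{u}_j<u^*_j(t)<\overline{u}_j$. Because the admissible set \eqref{U} is a Cartesian product of intervals, strict interiority in coordinates $i$ and $j$ decouples from the constraints on the remaining coordinates: for every sufficiently small perturbation $\delta u$ supported on $I$ and acting only in the $i$-th and $j$-th components, the control $u^*+\delta u$ still takes values in $\mathbb{U}$ and is therefore admissible. In particular, two-sided (bilateral) variations of $u^*_i$ and $u^*_j$ are feasible on $I$, which is exactly the freedom available in the open-control-set case treated in Theorem~\ref{ThmGohpH}. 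Equivalently, one may freeze all other components at their optimal values and view the dynamics as a control-affine system in the two free, unconstrained inputs $u_i,u_j$.

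Next I would apply the second-order variational machinery behind the Goh conditions (Theorem~\ref{ThGoh} in Appendix~\ref{AppGeneralSingular}), restricted to variations in the directions $i$ and $j$. The derivation of the Goh equalities relies only on admissible bilateral variations in the pair of components under consideration -- through the Goh transformation, which exchanges a control variation for a state variation and uses the antisymmetry of the Lie bracket -- and never appeals to variations in the possibly saturated directions. Since such variations are available on $I$ by the previous step, the computation leading to Theorem~\ref{ThmGohpH} carries over to the pair $(i,j)$ on $I$. Reading off the block form of the bracket from \eqref{nonpHbracket} (with $f_0$ replaced by $f_j$), namely $[f_i,f_j]=\big([g_i,g_j],\,\ell_j'g_i-\ell_i'g_j\big)^T$, and using the normalization $p_{n+1}=1$ fixed in \eqref{deffi} and the surrounding discussion, the resulting Goh condition yields $p^T[g_i,g_j]=0$ and $\ell_j'\,g_i-\ell_i'\,g_j=0$ on $I$, which is the assertion.

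The step I expect to be the main obstacle is this localization itself: one must verify rigorously that the admissible variations required by the Goh analysis can be confined to coordinates $i$ and $j$ and to the subinterval $I$ without violating the box constraints in the other coordinates, which may well be active. This is precisely where the product structure \eqref{U} and the strict interiority hypothesis enter. They guarantee that the portion of the second variation probed by variations in the non-saturated directions coincides with the unconstrained one, so that no sign-constrained boundary terms appear and one recovers the equalities rather than mere inequalities.
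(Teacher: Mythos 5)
Your proposal is correct and takes essentially the same route as the paper: the paper's (implicit) proof simply invokes the general constrained Goh condition, Theorem~\ref{ThGohconstraints} of Appendix~\ref{AppGeneralSingular}, and specializes the Lie bracket to the block structure \eqref{deffi} with $p_{n+1}=1$, while your localization argument merely unpacks why that appendix theorem holds for the strictly interior components. The only caveat, which you inherit from the paper's own statement rather than introduce, is that the Goh condition delivers the single scalar identity $p^T[g_i,g_j]+\ell_j'\,g_i-\ell_i'\,g_j=0$, and the splitting into the two separate vanishing conditions is not justified by the bracket computation alone.
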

\SA{
The proof of Theorem~\ref{PropGohnonlinearpHconstraints} is not a straightforward consequence of the unconstrained version of Theorem \ref{ThGoh}. For more details, the reader is referred to \cite{ABDL12,FraTon13}.
}

\SA{One gets the following generalized Legendre-Clebsch condition for the square submatrix of $W$ corresponding to singular components.}

\begin{theorem}[Generalized Legendre-Clebsch condition and feedback for\-mu\-la for nonlinear ordinary pH control under control constraints]
\label{GLCnlconstraints}
\SA{
Under the hypotheses of Theorem~\ref{PropGohnonlinearpHconstraints}, let $(a,b)$ be an open interval on which
\begin{equation*}
   \left\{
   \begin{array}{ll}
   \underline{u}_i < u^*_i(t) < \overline{u}_i, \quad  &\text{for } i\in S,\\
    u^*_i(t) = \underline{u}_i, \quad  &\text{for } i\in \underline{B},\\
    u^*_i(t) = \overline{u}_i, \quad  &\text{for  } i\in \overline{B}.
    \end{array}
    \right.
\end{equation*}
Thus, $\{S,\underline{B},\overline{B}\}$ is a partition of $\{1,\dots,m\}$ into {\em singular, lower-bang} and {\em upper-bang} components of the control $u^*.$}
One gets, \SA{for $i\in S$,} that
\begin{eqnarray}
&&         \lambda^T [f_0,[f_0,f_i]]  + \sum_{j \in S } u_j^*  \lambda^T [ f_j,[ f_0, f_i]]  \label{expressionusing} \\
   &&   + \sum_{j \in \underline{B} } \underline{u}_j   \lambda^T [ f_j,[ f_0, f_i]]  + \sum_{j \in \overline{B} } \overline{u}_j  \lambda^T [f_j,[f_0,f_i]] = 0. \nonumber
\end{eqnarray} 
holds on $(a,b)$.
 Additionally, the $|S| \times |S|$-matrix $W_S(t)$ with entries $w_{ij}=\lambda^T [ f_j,[ f_0, f_i]],$ for $i,j \in S$, is negative semidefinite.
\end{theorem}
\begin{proof}
The expression \eqref{expressionusing} follows from calculations analogous to \eqref{Hu0}--\eqref{dotHu0}, \eqref{ddotHu} and Goh condition of Theorem \ref{GohNLpH}. The negative semidefiniteness of $W_S$ holds in view of \cite[Theorem 4.1]{FraTon13}.
\end{proof}

\begin{remark}[A feedback formula for the singular controls]\label{rem:fbformula}{\rm 
    The matrix $W_S$ in Theorem \ref{GLCnlconstraints} is the submatrix of $W$ in \eqref{Wd} corresponding to the singular components of the control. Whenever  $W_S$ is negative definite, by using expression \eqref{expressionusing}, one can write the singular control in terms of the state variable $x$, the adjoint variable $
\lambda$ and the bang controls.
    }
\end{remark}

\subsection{Optimality conditions for nonlinear pH descriptor systems}\label{sec:dae}
In this section we extend the results from the previous two subsections to a special class of pH descriptor systems. For general descriptor systems, optimality conditions have been derived in 
\cite{KunMeh08}, see  \cite{KunM24} for a detailed exposition. But again these do not exploit the particular structure.

It has been shown in \cite{CamKM12} that by an appropriate regularization procedure, using transformations, derivatives, and feedback designs, a general descriptor system can always be 
transformed to a so-called \emph{index-one} or \emph{strangeness-free} system, see \cite{KunM24}.
Then, for a system of the form \eqref{stateeq}--\eqref{output}, after appropriate coordinate changes,  we may assume 
 that the system has the structure
\begin{eqnarray}
&& \begin{bmatrix} I_{n_1} & 0 \\ 0 & 0 \end{bmatrix}\begin{bmatrix}\dot x_1\\ \dot x_2\end{bmatrix} = \begin{bmatrix} G_{1}\\ G_{2}\end{bmatrix} (x) \, u+\label{stateeqnew}\\
&&\left (
    \begin{bmatrix} J_{11} & J_{12} \\ -J_{12}^T & J_{22} \end{bmatrix}(x)-
    \begin{bmatrix} R_{11} & R_{12} \\ R_{12}^T & R_{22} \end{bmatrix}(x)\right )
    \begin{bmatrix} Q_{11} & Q_{12}\\ Q_{21}&Q_{22} \end{bmatrix}\begin{bmatrix} x_1\\ x_2\end{bmatrix}\nonumber 
\end{eqnarray}
with  {\em output} 
\begin{equation}
\label{outputnew }
y = \begin{bmatrix} G_{1}(x)^\top & G_{2}(x)^\top\end{bmatrix} \begin{bmatrix} Q_{11} & Q_{12}\\ Q_{21}&Q_{22} \end{bmatrix}\begin{bmatrix} x_1\\ x_2\end{bmatrix}.
\end{equation}

Observe that the condition $Q^TE=E^TQ\geq 0$ implies that $Q_{12}=Q_{21}^T=0$ and $Q_{11}\geq 0$. In the following, to keep the presentation  simple, we also assume that $G_2(x)=0$. If this is not the case, then the following reduction procedure will lead to a system with a feed-through term for which an extra transformation is necessary, see \cite{MehU23}. The assumption that the system is of index at most one  means that from the algebraic equation 
\begin{align*}
0=&[(J_{22}(x_1,x_2)-R_{22}(x_1,x_2))Q_{21}+(-J_{12}^T(x_1,x_2)-R_{12}^T(x_1,x_2))Q_{11}x_1\\
&+[(J_{22}(x_1,x_2)-R_{22}(x_1,x_2))Q_{22}x_2,
\end{align*}
the variable $x_2$ can be expressed 
via the Implicit Function Theorem as a function of $x_1$ and inserted in the coefficients. Due to the structure of $G(x)$,
$x_2$ contributes to the cost functional only indirectly as argument of $G$.  Thus, we can first solve the optimal control problem for $x_1$ and 
obtain $x_2$ in a post-processing step.
This means that the dynamics  is described by the ordinary pH system
\begin{equation*}
\begin{aligned}
    \dot{x}_1 &= (\tilde{J}_{11}(x_1) - \tilde{R}_{11}(x_1)) Q_{11} x_1 + \tilde{G}_1(x_1) u, \\
    y &= \tilde{G}_1^T(x_1) Q_{11} x_1,
\end{aligned}
\end{equation*}
and the cost functional is given by
\[
\mathcal J(x_1,u)=\int_{t_0}^{T} x_1^T(\tau)  Q_{11}^T \tilde G(x_1(\tau))u(\tau) d\tau.
\]
Thus we have reduced the problem
to one of the form \eqref{nonlinearpHproblem} for which the previous simplified formulas directly apply.

\section{Optimality conditions for linear pH systems}\label{sec:deslin}

In the last section we have presented the Goh conditions for pH (descriptor) systems which turned out to be much simpler than those for general systems.  This simplicity is even more pronounced in the case of linear pH systems.
 We again start with the optimal control of ordinary linear pH systems.
\subsection{Optimality conditions for linear pH systems without control constraints}
 \label{sec:linph}
 In this subsection we analyze the Goh optimality conditions for problems when the constraint equation is given by a linear time-invariant pH system and the energy function $\mathcal E=\frac 12 x^TQx$ is again quadratic in the state, i.e.,
 \begin{align}
\dot{x}(t) &= (J - R) Q x(t) + G u(t), \quad x(0) = x^0, \label{linearpH} \\
y(t) &= G^T Q x(t), \nonumber
\end{align}
 where
 \begin{itemize}
 \item $J \in \mathbb R^{n,n}  $ is skew-symmetric;
 \item $ R \in  \mathbb R^{n,n} $ is symmetric positive semidefinite;
 \item $ Q \in \mathbb R^{n,n} $ is symmetric positive definite;
 \item $ G =\begin{bmatrix}g_1& \ldots& g_m\end{bmatrix}  \in \mathbb R^{n,m}$ has full rank $m \leq n;$
 \item $ u(t) \in \mathbb{U}$ for a.e. $t,$ where $\mathbb{U}$ is an open subset of $\mathbb{R}^m.$ 
 \end{itemize}
Note that more general linear port-Hamiltonian formulations are possible, 
we follow here the formulation considered in \cite{FauKMPSW25,FauMPSW22,SchPFWM23}.

We study the optimal control problem with quadratic cost
\begin{equation} 
\label{linearpHOCP}
\begin{aligned}
\min \quad & \int_0^T \left( y^T Y y + y^T N u + \ell^T u \right) \, dt, \\
& \text{s.t.} \quad \eqref{linearpH},
\end{aligned}
\end{equation}
where the matrices involved in the cost functional satisfy
 \begin{itemize}
 \item $Y \in \mathbb{R}^{n,n}$  is symmetric and positive semidefinite;
 \item $N=\begin{bmatrix}n_1& \ldots& n_m\end{bmatrix} \in \mathbb R^{m,m}$; 
 \item $ \ell=\begin{bmatrix}\ell_1& \ldots& \ell_m\end{bmatrix}^T   \in \mathbb{R}^{m}$. 
 \end{itemize}
In the following, for simplicity, we often use the abbreviation 
\begin{equation}
    \label{A}
    A \coloneqq (J-R)Q.
\end{equation}

To compute the Lie bracket expressions for problem \eqref{linearpHOCP}, we again add an auxiliary state variable $x_{n+1}$ to transform the problem into  Mayer form (with only terminal cost)  given by the following dynamics
\begin{align*}
\dot x_{n+1} & = y^T Y y + y^T N u  + \ell^T u =  x^T Q G Y G^T Q x  + x^T Q G N u + \ell^T u,\\
x_{n+1} (0) &= 0.
\end{align*}
Then we set
\begin{equation*}
    f_0 := 
\begin{bmatrix}
Ax \\
x^T Q G Y G^T Q x
\end{bmatrix},\quad
f_i := 
\begin{bmatrix}
g_i \\
x^T Q G n_i + \ell_i
\end{bmatrix},
\end{equation*}
for $i=1,\dots,m,$ thus $f_0,f_1,\dots, f_m$
are functions from $\mathbb{R}^{n+1}$ to $\mathbb{R}^{n+1}$.
We obtain
\[
[f_i,f_j] = 
\begin{bmatrix}
0 \\
\vdots\\
0\\
n_j^T G^T Q g_i - n_i^T G^T Q g_j
\end{bmatrix},\quad i,j=1,\dots,m,
\]
so that
\begin{equation}
\label{Lielinear}
 \begin{bmatrix}
     \lambda \\
     1
 \end{bmatrix}^T [f_i,f_j] = n_j^T G^T Q g_i - n_i^T G^T Q n_j,  
\quad i,j=1,\dots,m,
\end{equation}
and we get the following Goh condition.
\begin{theorem}[Goh condition for linear pH systems]
\label{GohlinearpH}
For the optimal control problem \eqref{linearpHOCP}, 
\begin{equation}
\label{GohpH}
N^T G^T Q G  \text{ is symmetric}
\end{equation}
 along optimal trajectories.
\end{theorem}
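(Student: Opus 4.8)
The plan is to derive the Goh condition directly from the general Goh conditions of Theorem~\ref{ThmGohpH} (or equivalently Theorem~\ref{ThGoh}) by specializing them to the linear setting \eqref{linearpHOCP}. The paper has already done the heavy lifting: equation~\eqref{Lielinear} gives the exact value of the adjoint paired with the Lie bracket, namely $\begin{bmatrix} p \\ 1 \end{bmatrix}^T [f_i,f_j] = n_j^T G^T Q g_i - n_i^T G^T Q g_j$ for $i,j=1,\dots,m$. The Goh condition asserts that this scalar vanishes along optimal trajectories for every pair $i,j$. So the first step is simply to invoke Theorem~\ref{ThmGohpH}, which says precisely that $p^T[g_i,g_j]=0$ together with $\ell_j' g_i - \ell_i' g_j = 0$; in the linear case these two conditions collapse into the single identity from \eqref{Lielinear} because the first-order brackets $[g_i,g_j]$ of the constant vector fields $g_i,g_j$ vanish identically, leaving only the extra-coordinate contribution.

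Next I would translate the resulting scalar identities into the matrix statement \eqref{GohpH}. Setting
\[
n_j^T G^T Q g_i = n_i^T G^T Q g_j \qquad \text{for all } i,j=1,\dots,m,
\]
I observe that the left-hand side is exactly the $(i,j)$ entry of the $m\times m$ matrix $N^T G^T Q G$, since the $j$-th column of $G$ is $g_j$ and the $i$-th column of $N$ is $n_i$. Indeed, $(N^T G^T Q G)_{ij} = n_i^T G^T Q g_j$, while the Goh identity rewrites $n_i^T G^T Q g_j = n_j^T G^T Q g_i = (N^T G^T Q G)_{ji}$. Hence $(N^T G^T Q G)_{ij} = (N^T G^T Q G)_{ji}$ for every pair of indices, which is exactly the statement that $N^T G^T Q G$ is symmetric. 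This is the entirety of the claim.

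The only genuine conceptual step — and the main thing to get right rather than a real obstacle — is the index bookkeeping that identifies the scalar Goh identities with the off-diagonal symmetry of the single matrix $N^T G^T Q G$, being careful about which factor supplies rows and which supplies columns. A secondary point worth a sentence is why the two lines of the nonlinear Goh condition in Theorem~\ref{ThmGohpH} reduce to one: here $g_i = G_i$ is constant in $x$, so $g_i' = 0$ and the Lie bracket $[g_i,g_j] = g_j' g_i - g_i' g_j$ vanishes, making the first condition $p^T[g_i,g_j]=0$ automatic; meanwhile $\ell_i(x) = x^T Q G n_i + \ell_i$ has constant gradient $\ell_i' = n_i^T G^T Q$, so the second condition $\ell_j' g_i - \ell_i' g_j = 0$ becomes exactly $n_j^T G^T Q g_i - n_i^T G^T Q g_j = 0$, matching \eqref{Lielinear}. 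With this observation in place the proof is immediate and requires no further computation.
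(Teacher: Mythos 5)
Your proof is correct and follows essentially the same route as the paper: the paper computes $[f_i,f_j]$ and the pairing \eqref{Lielinear} in the text immediately preceding the theorem and then simply cites Theorem~\ref{ThGoh}, exactly as you do, with your index bookkeeping making explicit why the vanishing of the scalar brackets for all $i,j$ is equivalent to the symmetry of $N^T G^T Q G$. As a side note, your expression $n_j^T G^T Q g_i - n_i^T G^T Q g_j$ corrects an apparent typo in \eqref{Lielinear}, where the last factor is printed as $n_j$ instead of $g_j$.
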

\begin{proof}
The result follows from Theorem \ref{ThGoh}.
\end{proof}

An optimality condition for linear pH systems has been studied in \cite{SchPFWM23}. In order to compare our results with those in \cite{SchPFWM23}, we see that
the optimality condition \eqref{GohpH} is automatically satisfied in their framework, since there $N=I_m$ is a standing hypothesis.
\SA{Observe that, while in \cite{SchPFWM23} the terminal point is fixed—requiring the imposition of a normality condition—our problem \eqref{linearpHOCP} does not include terminal constraints. Although this may be seen as a limitation in certain contexts, it has the advantage of ensuring normality without additional assumptions.} The result also generalized the tracking control result discussed in \cite{SchZBSTW24}.

We next compute the expressions of the Lie brackets for the linear problem \eqref{linearpHOCP} in order to display the Goh and Legendre-Clebsch conditions. For $i=1,\dots,m$ we get
\[
[f_0,f_i] =
\begin{bmatrix}
-Ag_i \\
n_i^T G^T QA x - x^T Q G (M+M^T) G^T Q g_i
\end{bmatrix},
\]
%
\begin{multline*}
 \left[ f_0,[f_0,f_i] \right] = 
\begin{bmatrix}
A^2 g_i \\
n_i^T G^T Q A^2 x - 2g_i^T Q G M G^T Q A x + 2 x^T Q G M G^T Q A g_i
\end{bmatrix},
\end{multline*}
%
and
\begin{eqnarray*}
    d_i & =& \begin{bmatrix}
     \lambda \\
     1
 \end{bmatrix}^T \left[ f_0,[f_0,f_i] \right] \\
    &=& 
  \lambda^T A^2 g_i + n_i^T G^T Q A^2 x - 2 g_i^T Q G M G^T Q A x  +  2 x^T Q G M G^T Q A g_i .
\end{eqnarray*}
Additionally, for $ i,j=1,\dots,m$, we have 
\[
\left[ f_j,[f_0,f_i] \right] = 
\begin{bmatrix}
0 \\
n_i^T G^T Q A g_j - 2 g_i^T Q G MG^T Q b_j + n_j^T G^T QAg_i
\end{bmatrix}   
\]
and
\[
\begin{bmatrix}
     \lambda \\
     1
 \end{bmatrix}^T \left[ f_j,[f_0,f_i] \right] = 
 n_i^T G^T Q A G_j - 2g_i^T Q G M G^T Q g_j + n_j^T G^T QAg_i  .
 \]

Consequently, the matrix $W$ in \eqref{linearpHOCP} is given by
\[
W = N^T G^T Q A G - 2G^T Q G M G^T Q G + (N^T G^T QAG)^T.
\]
Inserting the structure of the matrix  $A$ (see \eqref{A}) and using the skew-symmetry of $J$, we get
\begin{equation}
   \label{W}
W = 
-N^T G^T Q R Q G - 2 G^T Q G M G^T Q G  
- (N^T G^T Q R Q G)^T,  
\end{equation}
%
and we obtain the following {\em Legendre-Clebsch condition.}
\begin{theorem}[Generalized Legendre-Clebsch condition for linear pH problems]
\label{PropgenLC}
Whenever the optimal control problem \eqref{linearpHOCP} of the linear port-Hamiltonian admits an optimal solution, the matrix $W$ given in \eqref{W}
is negative semidefinite.    
\end{theorem}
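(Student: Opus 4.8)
The plan is to derive the negative semidefiniteness of $W$ directly from the general second-order necessary condition, namely the generalized Legendre--Clebsch condition of Theorem~\ref{thm:genLC} in Appendix~\ref{AppGeneralSingular}, applied to the Mayer-form reformulation of \eqref{linearpHOCP} with the explicitly computed vector fields $f_0,\dots,f_m$. Since the excerpt has already carried out the bracket computations and assembled $W = N^T G^T Q A G - 2 G^T Q G M G^T Q G + (N^T G^T Q A G)^T$, the core task is merely to invoke the general theorem, which asserts that the matrix with entries $\begin{bmatrix} p\\ 1\end{bmatrix}^T [f_j,[f_0,f_i]]$ is negative semidefinite along a singular optimal arc. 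First I would identify this matrix of second brackets, row-by-row, with the expression for $W$ displayed above, so that the conclusion of Theorem~\ref{thm:genLC} transfers verbatim.

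Second, I would justify the simplified closed form \eqref{W}. The matrix $A = (J-R)Q$ splits into a skew part (through $J$) and a symmetric dissipative part (through $R$). When $A$ is inserted into $N^T G^T Q A G + (N^T G^T Q A G)^T$, the symmetrization cancels the contribution of $JQ$: indeed $G^T Q (JQ) G + \big(G^T Q (JQ) G\big)^T = G^T Q J Q G + G^T Q J^T Q G = G^T Q (J+J^T) Q G = 0$ by skew-symmetry of $J$. Hence only the $-RQ$ terms survive, giving the two flanking terms $-N^T G^T Q R Q G - (N^T G^T Q R Q G)^T$ in \eqref{W}, while the middle term $-2 G^T Q G M G^T Q G$ is unaffected. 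This algebraic reduction is routine but worth spelling out, since it is what produces the clean pH form of the condition.

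The main obstacle, and the step requiring the most care, is \emph{verifying the hypotheses under which Theorem~\ref{thm:genLC} applies}. The general Legendre--Clebsch condition presupposes that the Goh conditions hold (so that the arc is totally singular and the second-order condition is well posed) and that we are along an optimal trajectory with the normalized adjoint $p_{n+1}=1$. I would therefore note that Theorem~\ref{GohlinearpH} already guarantees the relevant Goh condition \eqref{GohpH} along optimal trajectories, placing us in the regime where Theorem~\ref{thm:genLC} is valid; the normality and the fixed value $p_{n+1}=1$ follow from the absence of terminal constraints, exactly as fixed earlier in the paper. One subtlety to flag is the definition of the matrix $M$ appearing in the cost-related brackets: I would confirm that the symmetric combination $M+M^T$ entering the computation of $[f_0,[f_0,f_i]]$ is consistent with the $M$ used in the final $W$, so that no spurious skew part is silently dropped.

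In summary, the proof is short: the statement is an immediate corollary of the general theory once the bracket identifications are made. A concise write-up would read as follows.

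\begin{proof}
The result is an immediate consequence of the generalized Legendre--Clebsch condition in Theorem~\ref{thm:genLC} of Appendix~\ref{AppGeneralSingular}. Indeed, along an optimal trajectory the Goh condition \eqref{GohpH} holds by Theorem~\ref{GohlinearpH}, so the arc is totally singular and Theorem~\ref{thm:genLC} applies; since problem \eqref{linearpHOCP} involves no terminal constraints, the extremal is normal and we may take $p_{n+1}=1$. The theorem then asserts that the matrix with entries $\begin{bmatrix} p\\ 1\end{bmatrix}^T [f_j,[f_0,f_i]]$ is negative semidefinite. Using the bracket computations above, this matrix equals
\[
W = N^T G^T Q A G - 2 G^T Q G M G^T Q G + (N^T G^T Q A G)^T .
\]
Finally, inserting $A=(J-R)Q$ and using the skew-symmetry of $J$, we have
\[
G^T Q J Q G + (G^T Q J Q G)^T = G^T Q (J+J^T) Q G = 0,
\]
so the $J$-contribution cancels in the symmetrized terms, leaving
\[
W = -N^T G^T Q R Q G - 2 G^T Q G M G^T Q G - (N^T G^T Q R Q G)^T,
\]
which is the expression \eqref{W}. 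Hence $W$ is negative semidefinite.
\end{proof}
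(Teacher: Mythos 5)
Your proposal takes essentially the same route as the paper: the paper gives no separate proof of Theorem~\ref{PropgenLC} beyond the bracket computations displayed just before it and the (implicit) appeal to Theorem~\ref{thm:genLC}, and your write-up reproduces exactly that — identify $W$ with the matrix $\bigl[\,\begin{bmatrix}p\\1\end{bmatrix}^T[f_j,[f_0,f_i]]\,\bigr]_{ij}$, check normality and the Goh condition, and invoke the general Legendre--Clebsch result. One point deserves a caveat, though. The cancellation identity you display,
\[
G^T Q J Q G + \bigl(G^T Q J Q G\bigr)^T = G^T Q (J+J^T) Q G = 0,
\]
is correct, but it is not the identity you actually need: the symmetrized term in $W$ carries the factor $N^T$, and
\[
N^T G^T Q J Q G + \bigl(N^T G^T Q J Q G\bigr)^T = N^T G^T Q J Q G - G^T Q J Q G\, N,
\]
which vanishes only when $N$ commutes suitably with the skew matrix $G^T Q J Q G$ (e.g.\ $N=0$ or $N$ a scalar multiple of $I_m$, the cases emphasized later in the paper), not for a general $N\in\mathbb{R}^{m,m}$. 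This wrinkle is present in the paper's own passage from $N^T G^T Q A G + (N^T G^T Q A G)^T$ to \eqref{W} as well, so you are faithfully reproducing the intended argument; but as written, your justification (''the symmetrization cancels the contribution of $JQ$'') proves the cancellation only for the $N$-free expression and should either restrict to such $N$ or retain the $J$-terms in \eqref{W}. Everything else — the verification of the hypotheses of Theorem~\ref{thm:genLC}, the normalization $p_{n+1}=1$, and the flag about the $M$ versus $Y$ notation in the cost — is correct and matches the paper.
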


In the often considered special case of {\em minimizing the supplied energy,} that is, setting
\begin{equation}
\label{MellN}
M=0,\quad \ell=0,\quad N=I,
\end{equation}
one gets 
\begin{equation}
\label{Wsupply}
    W = -2 G^T QRQG,
\end{equation}
which is positive semidefinite, in view of the standing hypotheses for $Q$ and $R$. Moreover, whenever $R$ is definite, so  will be $W,$ leading then to a {\em feedback expression for the singular control from \eqref{Wd}.}
In the latter case, along such trajectories, the solution is asymptotically stable since the dissipation term acts in this direction. 
Additionally, under the hypotheses used in \cite{SchPFWM23}, namely, \eqref{MellN}  together with
\begin{equation}
\label{HypSchaller}
{\rm Im}(G) \cap {\rm Ker}(RQ) = 0,
\end{equation}
%
the expression of $W$ also coincides with  \eqref{Wsupply}, 
which is negative definite in view of assumption \eqref{HypSchaller}. Here we see directly that under the conditions \eqref{MellN} and
\eqref{HypSchaller} (as in \cite{SchPFWM23}), a feedback solution exists. On the other hand our analysis is more general, since we get  analogous results under noticeably weaker assumptions.

\begin{remark}\label{RemarkHigherOrder}{\rm 
    In Theorems \ref{GohlinearpH} and \ref{PropgenLC}, we have derived the Goh and generalized Legendre-Clebsch conditions for linear port-Hamiltonian (pH) systems, which correspond to second-order optimality conditions. \SA{The singular control is characterized when the singularity is of {\em order 1} (i.e., the control appears explicitly in the derivative $\frac{d^2}{dt^2} \mathcal{H}_{u}$).} Higher-order conditions (see, e.g., \cite{Knobloch80,Krener1977}) involve increasingly complex higher-order Lie brackets. Even for linear pH systems, these expressions become overly intricate, limiting their practical utility and insight.
    }
\end{remark}
\begin{remark}[Cost functional with reference trajectory]
\label{rem:reftra}
{\rm 
In many applications one wants to control a system towards a given reference trajectory  $x_{\rm ref}$. Replacing $x=\tilde x-x_{\rm ref}$ one gets a given inhomogeneity in the state and the output equation.
In this case, 
the system results in
 \begin{equation}
\begin{aligned}
\dot{\tilde{x}} &= (J - R) Q \tilde{x} + G u + h(t), \\
y &= G^T Q (\tilde{x} - x_{\rm ref}) = G^T Q \tilde{x} + k(t).
\end{aligned}
\label{linearpHref}
\end{equation}
 After removing the terms depending only on the reference trajectory, the cost functional takes the form
\begin{equation}
\label{costref}
\begin{aligned}
    \int_0^T \big[ & (x - 2x_{\rm ref})^T Q G Y G^T Q x  + (x - x_{\rm ref})^T Q G N u + \ell^T u \big] \, dt.
\end{aligned}
\end{equation}
For the optimal control problem governed by system \eqref{linearpHref} and associated to the cost \eqref{costref}, the expressions for the Lie brackets coincides with \eqref{Lielinear}, the matrix $W$ matches with \eqref{W}, so that the optimality conditions do not change.
Additionally, the vector $d$ is given by
\[
\begin{aligned}
d_i =\ & \lambda^T A^2 g_i + n_i^T G^T Q A^2 x - 2 g_i^T Q G Y G^T Q A x  + 2 (x - x_{\rm ref})^T Q G Y G^T Q A g_i.
\end{aligned}
\]
}
\end{remark}

\begin{example}\label{ex:MDK}

{\rm To illustrate the elegance of the optimality conditions, consider the following example of a second order linear control problem that is discussed in detail in \cite{FauKMPSW25} and arises e.g. in the control of high rise buildings~\cite{WarsBohmSawoTari21}:
\begin{align}\label{msd}
M\tfrac{\mathrm{d}^2}{\mathrm{d}t^2}{q} + D\tfrac{\mathrm{d}}{\mathrm{d}t}{q} + Kq = Bu
\end{align}
with real symmetric positive definite matrices $M,K$ and $B$ of full column rank. In contrast to \cite{FauKMPSW25} we assume that the damping does not work on the whole position vector, i.e., that $D$ is only positive semidefinite.  Setting $x=\begin{bmatrix} x_1 \\ x_2 \end{bmatrix}=\begin{bmatrix} M \dot q \\ q \end{bmatrix}$,
$
R=\begin{bmatrix}   D  & 0
\\ 0 & 0\end{bmatrix}$, $J=\begin{bmatrix}  0 & 
-I \\
I & 0\end{bmatrix}$, $Q=\begin{bmatrix} M^{-1} & 0 \\ 0 & K \end{bmatrix}$, $G=\begin{bmatrix} B  \\ 0 \end{bmatrix}$, 
and  introducing a collocated output $y=G^T Q x$, the system has the form \eqref{linearpH}.
Unfortunately, the condition ${\rm Im}(G) \cap {\rm Ker}(RQ) = \{0\}$ that is used in~\cite{SchPFWM23} does not hold, since $DM^{-1}$ has a kernel and $W=-2B^TM^{-1} D M^{-1}B$ is only semidefinite. Hence the techniques in ~\cite{SchPFWM23} cannot be applied. Also the linear system with $W$ for the feedback  is, in general, not (uniquely) solvable. However, still a (non-unique) feedback solution may exist  if the right hand side $d$ is in the image of $W$, see \cite{FauKMPSW25,Meh91}. 
In case $M$ is only semidefinite  then one can rewrite the system in descriptor form as in \eqref{daemechsys} and the presented procedures apply to this case. We then obtain  the optimality conditions as in Section~\ref{sec:dae}.}
\end{example}
In this subsection we have seen that, by using the general theory, we can generalize the conditions  for optimality presented in \cite{SchPFWM23}. In the next subsection we consider the case of control bounds.
\subsection{Optimality conditions for linear pH problems  
subject to control bounds}

If we require control bounds, i.e.
\[
\mathbb{U} = [\underline{u}_1,\overline{u}_1] \times \dots [\underline{u}_m,\overline{u}_m],
\]
so that $u$ is subject to the inequalities
\begin{equation}
\underline{u}_i \leq u_i(t) \leq \bar{u}_i,\quad i=1,\dots,m,
\end{equation}
then we obtain the optimal control problem
\begin{equation}
\label{linearpHconstraints}
    \begin{split}
         \min \,\, &\int_0^T y^T Y y + y^T N u + \ell^T u \\
         {\rm s.t.}\,\,\,\,   &\dot x(t) = (J-R) Q x(t) + Gu(t),\\
         & x(0) = x^0,\\
        & y(t) = G^T Q x(t) ,\\
        & \underline{u}_i \leq u_i(t) \leq \bar{u}_i,\quad i=1,\dots,m.
    \end{split}
\end{equation}
In order to study \eqref{linearpHconstraints}, we consider \emph{switching functions}. 
For the $i$-th component of the control, the switching function is given by
\begin{equation}\label{swfun}
s_i \coloneqq \mathcal{H}_{u_i} = \begin{bmatrix}
     \lambda \\
     1
 \end{bmatrix}^T f_i =  \lambda^T g_i +  x^T Q G n_i + \ell_i .
\end{equation}
Introduce the {\em set of switching points}
\[
Z_i \coloneqq \{ t \in [0,T] : s_i(t) =0\},
\]
and the {\em interior set} for control variable $i,$ for $i=1,\dots,m:$
\[
W_i \coloneqq \{ t \in [0,T] : \underline{u}_i < u^*_i(t) < \overline{u}_i\}.
\]
In view of the maximum condition of Pontryagin's Maximum Principle \cite{Pontryagin}, one then has that $W_i$ is contained, up to a set of measure zero, in $Z_i$.

Now, following the notation in  \cite[Theorem 8]{SchPFWM23}, for any subset $\mathcal{I} \subseteq \{1,\dots,m\}$, we set
\[
u_{\mathcal{I}} \coloneqq (u_i)^T_{i\in \mathcal{I}},\qquad
G_{\mathcal{I}} \coloneqq (g_i)^T_{i\in \mathcal{I}},\qquad
s_{\mathcal{I}} \coloneqq (s_i)^T_{i\in \mathcal{I}},
\]
and analogous notations are used for other involved matrices and vectors.

At this point, it is useful to present the adjoint equation for \eqref{linearpHconstraints} which reads
\begin{equation*}
    \left\{
    \begin{array}{rl}
        \displaystyle \frac{d}{dt} \lambda^T &= - \lambda^T A = - \lambda_{n+1}\left( x^T Q G (Y + Y^T) G^T Q + u^T N^T G^T Q \right), \\[1ex]
        \dot{\lambda}_{n+1} &= 0,
    \end{array}
    \right.
\end{equation*}
recalling that $\lambda_{n+1} \equiv 1,$ since the involved terminal cost is  $x_{n+1}(T).$

\begin{theorem}[Goh and generalized Legendre-Clebsch conditions for linear pH problems under control constraints]
\label{PropGohlinear}
For problem \eqref{linearpHconstraints}, the Goh conditions read 
\begin{equation}
\begin{bmatrix} \lambda \\ 1 \end{bmatrix}^T [f_i,f_j] =  n_j^T G^T Q g_i - n_i^T G^T Q n_j  =0, 
\end{equation} 
on any open subinterval $(a,b)$ of $[0,T]$ on which 
\[
\underline{u}_i < u^*_i(t) < \overline{u}_i \quad \text{ and } \quad \underline{u}_j < u^*_j(t) < \overline{u}_j.
\]
The generalized Legendre-Clebsch condition states that the matrix
\begin{equation}
   N_{\mathcal{I}}^T G^T QAB_{\mathcal{I}} - 2 G_{\mathcal{I}}^T QGY G^T Q G_{\mathcal{I}} + \left(N_{\mathcal{I}}^T G^T QAG_{\mathcal{I}}\right)^T
\end{equation}
is negative semidefinite on any open interval $(a,b) \subset [0,T]$ that is contained in  $\bigcap_{i\in \mathcal{I}} W_i$ for $\mathcal{I} \subseteq \{1,\dots,m\}$, i.e., $\mathcal{I}$ is any subset of indexes of simultaneous singular components of the control. 
\end{theorem}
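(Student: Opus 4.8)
The plan is to obtain both assertions as specializations of the general constrained singular-control result, Theorem~\ref{ThmWsemiconstraints} in Appendix~\ref{AppGeneralSingular}, after rewriting \eqref{linearpHconstraints} in the Mayer form already set up above by adjoining the auxiliary state $x_{n+1}$ and using the vector fields $f_0,f_1,\dots,f_m$. Crucially, every Lie bracket that will be needed has already been evaluated for the unconstrained problem in Section~\ref{sec:linph}, so the task reduces to identifying which brackets enter on an interval where a prescribed subset of controls is strictly interior, and then substituting the explicit formulas \eqref{Lielinear} and \eqref{W}.

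For the Goh part I would fix an open interval $(a,b)$ on which $\underline u_i<u_i^*<\overline u_i$ and $\underline u_j<u_j^*<\overline u_j$. On such an interval neither the $i$-th nor the $j$-th constraint is active, so the maximum condition of Pontryagin's Maximum Principle forces the switching functions \eqref{swfun} to vanish identically there, $s_i\equiv s_j\equiv 0$ (up to a null set, as already noted for $W_i\subseteq Z_i$). Differentiating these identities along the trajectory and eliminating the control exactly as in the unconstrained derivation, one recovers the first-order Goh relation that the adjoint covector $\begin{bmatrix} p \\ 1 \end{bmatrix}^T$ annihilates $[f_i,f_j]$; this is precisely what Theorem~\ref{ThmWsemiconstraints} delivers when applied to the pair $\{i,j\}$. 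Substituting the bracket computed in \eqref{Lielinear} then yields $n_j^T G^T Q g_i - n_i^T G^T Q n_j = 0$, the stated condition.

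For the generalized Legendre-Clebsch part I would apply Theorem~\ref{ThmWsemiconstraints} to the index set $\mathcal I$ of simultaneously singular (interior) components on an interval contained in $\bigcap_{i\in\mathcal I} W_i$. The theorem asserts negative semidefiniteness of the matrix with entries $\begin{bmatrix} p \\ 1 \end{bmatrix}^T [f_j,[f_0,f_i]]$ for $i,j\in\mathcal I$: the components with indices outside $\mathcal I$ sit at their bounds and, carrying no admissible two-sided variation, enter only through the state trajectory and the adjoint, not through this matrix. Reading off the $\mathcal I\times\mathcal I$ block of the same bracket computation that produced \eqref{W}, and invoking the skew-symmetry of $J$ in $A=(J-R)Q$ from \eqref{A} exactly as in the unconstrained case, produces the matrix displayed in the statement.

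The main obstacle is the bookkeeping in the reduction to the singular subset $\mathcal I$: one must verify that the presence of bang components (indices outside $\mathcal I$) injects no extra terms into the $\mathcal I\times\mathcal I$ Goh and Legendre-Clebsch blocks, and that the almost-everywhere vanishing of the switching functions on $W_i$ suffices to run the differentiation arguments on a full-measure subinterval of $(a,b)$. Both points are accommodated by the formulation of Theorem~\ref{ThmWsemiconstraints}, which is designed precisely for mixed bang-singular arcs; once its hypotheses are checked on $\bigcap_{i\in\mathcal I} W_i$, the remaining steps are the direct substitutions of \eqref{Lielinear} and the $\mathcal I$-restriction of \eqref{W} indicated above.
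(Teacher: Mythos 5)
Your proposal is correct and follows essentially the same route as the paper, whose proof is simply a citation of Theorems~\ref{ThGohconstraints} and~\ref{ThmWsemiconstraints} combined with the bracket computations of Section~\ref{sec:linph}; your added bookkeeping about the bang components and the switching functions is a faithful unpacking of that citation (note only that the Goh part is formally Theorem~\ref{ThGohconstraints} rather than Theorem~\ref{ThmWsemiconstraints}).
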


\begin{proof}
The proof follows from Theorems \ref{PropGohnonlinearpHconstraints} and \ref{GLCnlconstraints}.
\end{proof}

\begin{example}[Special cases $N=0$ and $N=I_m$]
\label{rem:speccase}{\rm 
When $N=0$ or $N=I_m$, using the notation introduced in \eqref{deffi} for the vector fields $f_i,$ one has that $[f_i,f_j] =0$ in the whole space $\mathbb{R}^n,$ independently of the trajectories. Therefore, {\em the Goh conditions hold trivially.}
Additionally, for $N=0,$ the matrix in the generalized Legendre-Clebsch condition of Theorem \ref{PropGohlinear} reads
\begin{equation}
      - 2 G_{\mathcal{I}}^T QGY G^T Q G_{\mathcal{I}}, 
\end{equation}
and for $N=I_m,$ it has the form
\begin{equation}
     - 2 G_{\mathcal{I}}^T Q[R+GY G^T] Q G_{\mathcal{I}}. 
\end{equation}
}
\end{example}

We  obtain the following feedback formulas.
\begin{theorem}[Feedback formulas for singular controls]
    For any subset $\mathcal{I} \subseteq \{1,\dots,m\},$ given an open interval $(a,b) \subseteq \bigcap_{i\in \mathcal{I}} W_i ,$ the following expressions for the control hold along optimal trajectories of problem \eqref{linearpHconstraints}
\begin{small}
  \begin{equation*}
\begin{aligned}
&    0 =  \lambda^T A^2 G_{\mathcal{I}}  + x^T \left[2 QG Y G^T QAG_{\mathcal{I}} 
-2A^T QGY G^T Q G_{\mathcal{I}} 
+ A^T A^T  Q G N_{\mathcal{I}} \right] \\
& +  u^T_{\mathcal{I}} \left[ N_{\mathcal{I}}^T G^T QAG_{\mathcal{I}} - 2 G_{\mathcal{I}}^T QGY G^T Q G_{\mathcal{I}} + \left(N_{\mathcal{I}}^T G^T QAG_{\mathcal{I}}\right)^T \right]\\
& +  u^T_{\mathcal{A}} \left[ N_{\mathcal{A}}^T G^T QAG_{\mathcal{I}} - 2 G_{\mathcal{A}}^T Q GY G^T Q G_{\mathcal{I}} + G_{\mathcal{A}}^T A^T  Q G N_{\mathcal{I}} \right].
\end{aligned}
\end{equation*}
\end{small}
    Additionally, for the special case $N=0$, one gets the reduced expression
\begin{eqnarray*}
0 
&=&  \lambda^T A^2 G_{\mathcal{I}} + x^T \left[2 QG Y G^T QAG_{\mathcal{I}} 
-2A^T QGY G^T Q G_{\mathcal{I}} 
 \right] \\
&& \quad -2  (u^T_{\mathcal I}  G_{\mathcal I}^T + u^T_{\mathcal A}  G_{\mathcal A}^T )QGY G^T Q G_{\mathcal{I}},
\end{eqnarray*}
and for $N=I_m$, 
\begin{eqnarray*}
0
&=&  \lambda^T A^2 G_{\mathcal{I}}  \\
&+& x^T \left[2 QG Y G^T Q (J-R) 
-2A^T QGY G^T  
+ A^T A^T   \right] Q G_{\mathcal{I}} \\
& &\quad -2 (u^T_{\mathcal I}  G_{\mathcal I}^T + u^T_{\mathcal A}  G_{\mathcal A}^T ) Q \left[   R  + G Y G^T  \right] Q G_{\mathcal{I}}.
\end{eqnarray*}
\end{theorem}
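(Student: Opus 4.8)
My plan is to derive these formulas as the specialization of the general feedback relation of Theorem~\ref{ThmWsemiconstraints} in the Appendix to the linear port-Hamiltonian structure of~\eqref{linearpHconstraints}, so that the whole content reduces to substituting the Lie-bracket expressions computed just above into that relation and sorting the result into the displayed matrix--vector layout. The mechanism behind Theorem~\ref{ThmWsemiconstraints} is that, on an interval $(a,b)\subseteq\bigcap_{i\in\mathcal I}W_i$, each singular component $u_i^*$, $i\in\mathcal I$, is strictly interior, so by Pontryagin's Maximum Principle the associated switching function $s_i$ of~\eqref{swfun} vanishes identically, and hence so do $\dot s_i$ and $\ddot s_i$.

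First I would differentiate $s_i\equiv0$. Using the state and adjoint equations one gets the control-affine expansion
\[
\dot s_i = \begin{bmatrix} p \\ 1 \end{bmatrix}^T[f_0,f_i] + \sum_{k=1}^m u_k \begin{bmatrix} p \\ 1 \end{bmatrix}^T[f_k,f_i].
\]
Here the key simplification is that, by~\eqref{Lielinear}, the quantities $\begin{bmatrix} p \\ 1 \end{bmatrix}^T[f_k,f_i]$ are \emph{independent of $x$ and $p$}: for $k\in\mathcal I$ they vanish by the Goh condition of Theorem~\ref{PropGohlinear}, while for $k\in\mathcal A$ they are constant and multiplied by a constant bang value $u_k$. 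Consequently the second differentiation annihilates the whole sum and leaves the clean relation
\[
\ddot s_i = \begin{bmatrix} p \\ 1 \end{bmatrix}^T[f_0,[f_0,f_i]] + \sum_{k=1}^m u_k \begin{bmatrix} p \\ 1 \end{bmatrix}^T[f_k,[f_0,f_i]] = 0, \qquad i\in\mathcal I.
\]

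Next I would split the index $k$ into the singular set $\mathcal I$ (carrying the unknown $u_{\mathcal I}$) and the bang set $\mathcal A$ (carrying the known, constant $u_{\mathcal A}$), and substitute the bracket expressions already computed in this subsection, namely $\begin{bmatrix} p \\ 1 \end{bmatrix}^T[f_0,[f_0,f_i]]=d_i$ and the $(i,k)$-entries $\begin{bmatrix} p \\ 1 \end{bmatrix}^T[f_k,[f_0,f_i]]$ of the matrix $W$ in~\eqref{W}. Collecting in $d_i$ the pure adjoint term $p^TA^2g_i$ and pulling the three contributions $n_i^TG^TQA^2x$, $-2g_i^TQGYG^TQAx$ and $2x^TQGYG^TQAg_i$ into a single coefficient of $x^T$ produces the $p^TA^2G_{\mathcal I}$ term and the bracketed $x^T[\cdots]$ term of the statement; the matrices multiplying $u_{\mathcal I}^T$ and $u_{\mathcal A}^T$ are the corresponding $\mathcal I\times\mathcal I$ and $\mathcal A\times\mathcal I$ blocks of these bracket values.

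The calculation is pure bookkeeping once the brackets are available, so the only points requiring care are identifying which terms of $d_i$ carry an explicit factor $x$ (these become the $x^T[\cdots]$ block) and indexing the bracket values into the correct $\mathcal I$- and $\mathcal A$-blocks; this assembling of the twice-differentiated switching conditions into matrix form is the main, though routine, obstacle. For the special cases I would substitute directly: with $N=0$ every term carrying $N_{\mathcal I}$ or $N_{\mathcal A}$ drops, leaving only the $GYG^T$ contributions; with $N=I_m$ I would insert $A=(J-R)Q$ from~\eqref{A} and use the skew-symmetry of $J$, so that $N^TG^TQAG$ together with its transpose collapses to the dissipative term $-2G^TQRQG$ and yields the stated $Q[R+GYG^T]Q$ form.
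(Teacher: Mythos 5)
Your proposal is correct and follows essentially the same route as the paper: on a singular arc the switching functions $s_{\mathcal I}$ vanish, so one differentiates them twice along the dynamics and reads off the feedback relation, specializing to $N=0$ and $N=I_m$ at the end. The only cosmetic difference is that you invoke the general Lie-bracket identity of Theorem~\ref{ThmWsemiconstraints} and substitute the brackets already computed in Section~\ref{sec:deslin}, whereas the paper recomputes $\dot s_{\mathcal I}$ and $\ddot s_{\mathcal I}$ explicitly from the state and adjoint equations (equations \eqref{dotsI} and \eqref{ddotsI}); the two computations are identical in content, including your observation that the $u$-dependent terms in $\dot s_{\mathcal I}$ are constant (by \eqref{Lielinear} and the Goh condition) and hence drop out upon the second differentiation.
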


\begin{proof}
Following the notation introduced above, and from \eqref{swfun}, one gets 
\[
s_{\mathcal{I}} =  \lambda^T G_{\mathcal{I}} + \lambda_{n+1} \left( x^T Q G N_{\mathcal{I}}  + \ell_{\mathcal{I}}  \right) .
\]
Therefore,
\begin{small}
\begin{eqnarray}
\label{dotsI}
        \dot{s}_{\mathcal{I}} &=&  \frac d{dt}\lambda^T G_{\mathcal{I}} + \lambda_{n+1}  \dot{x}^T Q G N_{\mathcal{I}}  \nonumber \\
        &=& - \lambda^T A G_{\mathcal{I}} - \lambda_{n+1} x^T Q G (Y+Y^T) G^T Q G_{\mathcal{I}} \\ && - u^T \underbrace{N^T G^T Q G_{\mathcal{I}}}_{(N^T G^T Q G)_{\mathcal{I}}} + x^T A^T Q G N_{\mathcal{I}} + \lambda_{n+1} u^T \underbrace{G^T Q G N_{\mathcal{I}}}_{(G^T QGN)_{\mathcal {I}}}.\nonumber 
\end{eqnarray}
\end{small}
We split the remainder  of the proof in two cases. 

{\bf Case 1:  $N=0$ or $N = I_m$.} From \eqref{dotsI} one gets that
\begin{equation}
    \dot{s}_{\mathcal{I}} =  - \lambda^T A G_{\mathcal{I}} -  x^T Q G (Y+Y^T) G^T Q G_{\mathcal{I}} + x^T A^T Q G N_{\mathcal{I}}. 
\end{equation}
Thus,
\begin{small}
\begin{equation}
\label{ddotsI}
\begin{aligned}
\ddot s_{\mathcal{I}} 
&=  \lambda^T A^2 G_{\mathcal{I}} + x^T \left[2 QG Y G^T QAG_{\mathcal{I}} 
-2A^T QGY G^T Q G_{\mathcal{I}} 
+ A^T A^T  Q G N_{\mathcal{I}} \right] \\
 +&  u^T \left[ N^T G^T QAG_{\mathcal{I}} - 2 G^T QGY G^T Q G_{\mathcal{I}} + G^T A^T  Q G N_{\mathcal{I}}\right].    
\end{aligned}
\end{equation}
\end{small}
For $N=0$, splitting the control vector in $u_{\mathcal{I}}$ and $u_{\mathcal{A}},$ one gets from latter equation that
\begin{small}
\begin{equation*}
\begin{aligned}
\ddot s_{\mathcal{I}} 
=&  \lambda^T A^2 G_{\mathcal{I}} + x^T \left[2 QG Y G^T QAG_{\mathcal{I}} 
-2A^T QGY G^T Q G_{\mathcal{I}} 
 \right] \\
&  -2  (u^T_{\mathcal I}  G_{\mathcal I}^T + u^T_{\mathcal A}  G_{\mathcal A}^T )QGY G^T Q G_{\mathcal{I}},
\end{aligned}
\end{equation*}
\end{small}
where $\mathcal A \coloneqq \mathcal{I}^c.$
For $N=I_m$, from \eqref{ddotsI}, one derives   
\begin{small}
\begin{equation*}
\begin{aligned}
\ddot s_{\mathcal{I}} 
=&  \lambda^T A^2 G_{\mathcal{I}} 
+ x^T \left[2 QG Y G^T Q (J-R) 
-2A^T QGY G^T  
+ A^T A^T   \right] Q G_{\mathcal{I}} \\
 -&2  (u^T_{\mathcal I}  G_{\mathcal I}^T + u^T_{\mathcal A}  G_{\mathcal A}^T ) Q \left[   R  + G Y G^T  \right] Q G_{\mathcal{I}},
\end{aligned}
\end{equation*}
\end{small}
where we replaced the matrix $A$ by its expression in \eqref{A}.

{\bf Case 2:  Arbitrary matrix $N$.} 
From expression \eqref{dotsI} for $\dot{s}_{\mathcal I}$,  one obtains
\begin{small}
\begin{equation*}
\begin{aligned}
    \ddot s_{\mathcal{I}} 
= & \lambda^T A^2 G_{\mathcal{I}} + x^T \left[2 QG Y G^T QAG_{\mathcal{I}} 
-2A^T QGY G^T Q G_{\mathcal{I}} 
+ A^T A^T  Q G N_{\mathcal{I}} \right] \\
& +  u^T_{\mathcal{I}} \left[ N_{\mathcal{I}}^T G^T QAG_{\mathcal{I}} - 2 G_{\mathcal{I}}^T QGY G^T Q G_{\mathcal{I}} + \left(N_{\mathcal{I}}^T G^T QAG_{\mathcal{I}}\right)^T \right]\\
& +d +  u^T_{\mathcal{A}} \left[ N_{\mathcal{A}}^T G^T QAG_{\mathcal{I}} - 2 G_{\mathcal{A}}^T Q GY G^T Q G_{\mathcal{I}} + G_{\mathcal{A}}^T A^T  Q G N_{\mathcal{I}} \right].
\end{aligned}
\end{equation*}
\end{small}
~
\end{proof}

\subsection{Linear descriptor systems}\label{sec:descriptorlinear}
The results in this section can be directly applied to the class of index-one descriptor systems as in the nonlinear case. One obtains an explicit formula for $x_2$ as
\begin{small}
\[ 
x_2=-[(J_{22}-R_{22})Q_{22}]^{-1}[(J_{22}-R_{22})Q_{21}+
(-J_{12}^T-R_{12}^T)Q_{11}]x_1.
\]
\end{small}
Inserting this into the first equation gives an ordinary  pH system  only in $x_1$ of the form 
\begin{equation*}
\begin{split}
     \dot x_1&= (\tilde J_{11}-\tilde R_{11})Q_{11} x_1 +\tilde G_1 u,\\
y&= \tilde G_1^T Q_{11} x_1,
\end{split}
\end{equation*} 
and $x_2$ can be determined in a post-processing step.

The cost functional is then
\[
\mathcal J(x_1,u)=\int_{t_0}^{T} x_1^T(\tau)  Q_{11}^T \tilde Gu(\tau) d\tau
\]
and thus we have again reduced the problem
to one for which the previous formulas apply. 

\section{Conclusions and future work}\label{sec:conclusion}

For the singular optimal control problem of controlling a port-Hamiltonian system with the supplied energy as cost function, the Goh optimality conditions are derived. It is shown that the use of the specific port-Hamiltonian structure leads to very elegant solution formulas, in particular in the case of linear port-Hamiltonian systems with quadratic Hamiltonian. Optimality conditions are also presented for the case of control constraints. 

It remains an open problem how to give necessary and sufficient conditions for the existence of a feedback control in the case when the matrix $W$ in the generalized Legende-Clebsch condition is only semidefinite. This is closely related to the topic  to derive the optimality conditions for general port-Hamiltonian descriptor systems.

Another classical application of optimal control is the computation of (feedback) controls $u=k(x)$ or output feedbacks $u=k(y)$ so that the closed loop system is asymptotically stable. To achieve this, one just chooses an infinite interval, i.e. $T=\infty$, and requires that for the closed loop solution, $\lim_{t\to \infty} x(t)=0$. For this case, analogous results to the ones obtained in this work are expected to hold.
It is also an interesting question what happens when the usual approach, of adding a quadratic penalty term   $\alpha \sum_{j=1}^m u_i^2$ to the cost function is used, and one considers the convergence behavior for $\alpha \to 0$, see \cite{JacGL70}. The Goh conditions will not appear in that context, since $\mathcal{H}_u$ depends on the control and, consequently, it is, in general, not differentiable in time.

We have studied problems without terminal constraints, which admit a unique multiplier that verifies \emph{normality}, i.e., the multiplier associated to the integral cost can be normalized to $1$. 
In  \cite{Aronna2018,ABDL12}  a framework for dealing with general terminal constraints of the form
$\varphi_i(x(T))\leq 0,$ $\eta_j(x(T))= 0,$ for $i=1,\dots, d_\varphi,$ and $ j=1,\dots, d_\eta$ was introduced. Also, \cite{FraTon13} consider terminal constraints of the form $x(T) \in K$ under an additional hypothesis. 

Using the techniques in \cite{Aronna2018,ABDL12} one may as well discuss the derivation of second order sufficient conditions. 

\section*{Acknowledgment}

M.S.A. acknowledges the financial support of the Brazilian agencies  FAPERJ processes E-26/203.223/2017, E-26/201.346/2021 and 210.037/2024,  CNPq  process 310452/2019-8 and CAPES process 88881.162133/2017-01. The current research was mainly developed while M.S.A. was holding a  Humboldt Fellowship  at TU Berlin (Germany).
V.M. has been supported by Deutsche Forschungsgemeinschaft (DFG) through the SPP1984 ``Hybrid and multimodal energy systems'' Project: \emph{Distributed Dynamic Security Control} and by the DFG Research Center Math+, Project  No.~390685689. 
\emph{Advanced Modeling, Simulation, and Optimization of Large Scale Multi-Energy Systems}.

%

%
%

\bibliographystyle{plain}
\bibliography{generalsoledad}

@string{MCSS = "Math. Control, Signals, Sys."}

@String{Elsevier= "Elsevier Publications" }

@String{Interscience = "Interscience" }

@String{Springer = "Springer-Verlag" }

@ARTICLE{JacGL70,
  author={Jacobson, D. and Gershwin, S. and Lele, M.},
  journal={IEEE Trans. Autom. Control}, 
  title={Computation of optimal singular controls}, 
  year={1970},
  volume={15},
  number={1},
  pages={67-73},
  keywords={Optimal control;Convergence;Cost function;Gradient methods;Jacobian matrices;Performance analysis},
  doi={10.1109/TAC.1970.1099360}
}

@inproceedings{CamKM12,
  Author =	 {S.~L. {Campbell} and P. {Kunkel} and V. {Mehrmann}},
  Title =	 {Regularization of linear and nonlinear descriptor
                  systems},
  Booktitle =	 {Control and Optimization with Differential-Algebraic
                  Constraints},
  Editor =	 {L.~T. {Biegler} and S.~L. {Campbell} and
                  V. {Mehrmann}},
  Series =	 {Advances in Design and Control},
  year =	 2012,
  Number =	 23,
  Pages =	 {17--36}
}

@article{KriS21,
  title={A port-{H}amiltonian approach to modeling and control of an electro-thermal microgrid},
  author={Krishna, A. and Schiffer, J.},
  journal={IFAC-PapersOnLine},
  volume={54},
  number={19},
  pages={287--293},
  year={2021},
  publisher={Elsevier}
}

@article{MehS23,
author={V. Mehrmann and A.J. van der Schaft},
title={Differential-algebraic systems with dissipative {H}amiltonian structure},
journal=MCSS,
doi={https://doi.org/10.1007/s00498-023-00349-2},
URL={http://arxiv.org/abs/2208.02737},
year={2023},
}

@book{AbrM08,
  title={Foundations of Mechanics},
  author={Abraham, R. and Marsden, J. E.},
  year={2008},
  publisher={American Mathematical Soc.}
}

@article{FauKMPSW25,
  title={Hidden regularity in singular optimal control of port-Hamiltonian systems},
  author={Faulwasser, T. and Kirchhoff, J. and Mehrmann, V. and Philipp, F. and Schaller, M. and Worthmann, K.},
  journal={DAE Panel},
  doi={https://doi.org/10.52825/dae-p.v3i.960},
  year={2025}
}

@article{WarsBohmSawoTari21,
title = {A port-{H}amiltonian approach to modeling the structural dynamics of complex systems},
journal = {Appl. Math. Modelling},
volume = {89},
pages = {1528-1546},
year = {2021},
author = {A. Warsewa and M. Böhm and O. Sawodny and C.Tarín}
}

@InCollection{KunM11a,
  Author =	 {P. {Kunkel} and V. {Mehrmann}},
  Title =	 {Optimal control for linear descriptor systems with
                  variable coefficients},
  BookTitle =	 {Numerical Linear Algebra in Signals, Systems and
                  Control},
  Pages =	 {313--339},
  year =	 2011,
  Publisher =	 {Springer New York},
}

@article{brugnoli2019por1t,
	title={Port-{H}amiltonian formulation and symplectic discretization of plate models Part I: Mindlin model for thick plates},
	author={Brugnoli, A. and Alazard, D. and Pommier-Budinger, V. and Matignon, D.},
	journal={Appl. Math. Modelling},
	volume={75},
	pages={940--960},
	year={2019},
	publisher={Elsevier}
}

@article{brugnoli2019port2,
	title={Port-{H}amiltonian formulation and symplectic discretization of plate models Part II: Kirchhoff model for thin plates},
	author={Brugnoli, A. and Alazard, D. and Pommier-Budinger, V. and Matignon, D.},
	journal={Appl. Math. Modelling},
	volume={75},
	pages={961--981},
	year={2019},
	publisher={Elsevier}
}

@article{RasCSS20,
  title={Twenty years of distributed port-{H}amiltonian systems: a literature review},
  author={Rashad, R. and Califano, F. and van der Schaft, A. J. and Stramigioli, S.},
  journal={IMA J. Math. Control and Inform.},
  volume={37},
  number={4},
  pages={1400--1422},
  year={2020},
  publisher={Oxford University Press}
}

@article{CarHLMR24,
  title={Port-{H}amiltonian formulations for the modeling, simulation and control of fluids},
  author={Cardoso-Ribeiro, F. L.and Haine, G. and Le Gorrec, Y. and Matignon, D. and Ramirez, H.},
  journal={Computers \& Fluids},
  volume={283},
  pages={106407},
  year={2024},
  publisher={Elsevier}
}

@article{siuka2011port,
	title={Port-{H}amiltonian modelling and energy-based control of the {T}imoshenko beam},
	author={Siuka, A. and Sch{\"o}berl, M. and Schlacher, K.},
	journal={Acta mechanica},
	volume={222},
	number={1},
	pages={69--89},
	year={2011},
	publisher={Springer}
}

@article{macchelli2007port,
	title={Port-based modeling of a flexible link},
	author={Macchelli, A. and Melchiorri, C. and Stramigioli, S.},
	journal={IEEE Trans.  Robotics},
	volume={23},
	number={4},
	pages={650--660},
	year={2007},
	publisher={IEEE}
}

@article{reis2021analysis,
title = {Passivity, port-{H}amiltonian formulation and solution estimates for a coupled magneto-quasistatic system},
journal = {Evolution Equations and Control Theory},
volume = {12},
number = {4},
pages = {1208-1232},
year = {2023},
doi = {10.3934/eect.2023008},
url = {https://www.aimsciences.org/article/id/640c3a3b98611f403630a311},
  author={Reis, T. and Stykel, T.},
}

@article{schoberl2008modelling,
	title={Modelling of piezoelectric structures--a {H}amiltonian approach},
	author={Sch{\"o}berl, M. and Ennsbrunner, H. and Schlacher, K.},
	journal={Math. Comp. Modelling  Dyn. Systems},
	volume={14},
	number={3},
	pages={179--193},
	year={2008},
	publisher={Taylor \& Francis}
}

@article{Altmann2017,
title = {A port-{H}amiltonian formulation of the {N}avier–{S}tokes equations for reactive flows},
journal = {Systems \& Control Letters},
volume = {100},
pages = {51-55},
year = {2017},
author = {R. Altmann and P. Schulze},
keywords = {Reactive flow, Port-{H}amiltonian formulation, Navier–Stokes equations, {H}amiltonian formulation, Energy-Based modeling}
}

@article{rashad2021port2,
	title={Port-{H}amiltonian modeling of ideal fluid flow: Part {II}. Compressible and incompressible flow},
	author={Rashad, R. and Califano, F. and Schuller, F. P. and Stramigioli, S.},
	journal={J. Geometry and Physics},
	volume={164},
	pages={104199},
	year={2021},
	publisher={Elsevier}
}

@article{hoang2011port,
	title={The port {H}amiltonian approach to modeling and control of continuous stirred tank reactors},
	author={Hoang, H. and Couenne, F. and Jallut, C. and Le Gorrec, Y.},
	journal={J. Process Control},
	volume={21},
	number={10},
	pages={1449--1458},
	year={2011},
	publisher={Elsevier}
}

@article{macchelli2014towards,
	title={Towards a port-based formulation of macro-economic systems},
	author={Macchelli, A.},
	journal={J. Franklin Institute},
	volume={351},
	number={12},
	pages={5235--5249},
	year={2014},
	publisher={Elsevier}
}

@article{SchaJelt2014,
	title={Port-{H}amiltonian systems theory: An introductory overview},
	author={van der Schaft, A. and Jeltsema, D.},
	journal={Foundations and Trends in Systems and Control},
	volume={1},
	number={2-3},
	pages={173--378},
	year={2014},
	publisher={Now Publishers Inc. Hanover, MA, USA}
}

@book{duindam2009modeling,
	title={Modeling and Control of Complex Physical Systems: the Port-{H}amiltonian Approach},
	author={Duindam, V. and Macchelli, A. and Stramigioli, S. and Bruyninckx, H.},
	year={2009},
	publisher={Springer Berlin, Heidelberg}
}

@Book{Jacob2012,
	author    = {Jacob, B. and Zwart, H. J.},
	publisher = {Birkhäuser Basel},
	title     = {Linear port-{H}amiltonian systems on infinite-dimensional spaces},
	year      = {2012},
	volume    = {223},
}

@article{rashad2021port,
	title={Port-{H}amiltonian modeling of ideal fluid flow: Part {I.} Foundations and kinetic energy},
	author={Rashad, R. and Califano, F. and Schuller, F. P. and Stramigioli, S.},
	journal={J. Geometry and Physics},
	volume={164},
	pages={104201},
	year={2021},
	publisher={Elsevier}
}

@article{wu2020reduced,
  title={Reduced Order {LQG} Control Design for Infinite Dimensional Port {H}amiltonian Systems},
  author={Wu, Y. and Hamroun, B. and Le Gorrec, Y. and Maschke, B.},
  journal={IEEE Trans. Autom. Control},
  volume={66},
  number={2},
  pages={865--871},
  year={2020},
  publisher={IEEE}
}

@article{lhmnc,
  title={Minimizing the energy supply of infinite-dimensional linear port-{H}amiltonian systems},
  author={Philipp, F. and Schaller, M. and Faulwasser, T. and Maschke, B. and Worthmann, K.},
  journal={IFAC-PapersOnLine},
  volume={54},
  number={19},
  pages={155--160},
  year={2021},
  publisher={Elsevier}
}

@article{MehU23,
author= {V. Mehrmann and B. Unger},
title={Control of port-{H}amiltonian differential-algebraic systems and
applications},
journal={Acta Numerica},
pages={395--515},
year={2023},
}

@article{KoeSSH21,
  title={Optimal control of port-{H}amiltonian systems: A continuous-time learning approach},
  author={K{\"o}lsch, L. and Soneira, P. J. and Strehle, F. and Hohmann, S.},
  journal={Automatica},
  volume={130},
  pages={109725},
  year={2021},
  publisher={Elsevier}
}

@article{NagLJB15,
  title={Port-{H}amiltonian systems in adaptive and learning control: A survey},
  author={Nageshrao, S. P. and Lopes, G. A. D. and Jeltsema, D. and Babu{\v{s}}ka, R.},
  journal={IEEE Trans. Autom. Control},
  volume={61},
  number={5},
  pages={1223--1238},
  year={2015},
  publisher={IEEE}
}

@article{MahZS00,
  title={Optimal control of seismically-excited building structures},
  author={Mahmoud, M. S. and Zribi, M. and Soh, Y. C.},
  journal={Computers \& Structures},
  volume={74},
  number={5},
  pages={521--533},
  year={2000},
  publisher={Elsevier}
}

@article{Masc22,
  title={Optimal control of thermodynamic port-{H}amiltonian systems},
  author={Maschke, B. and Philipp, F. and Schaller, M. and Worthmann, K. and Faulwasser, T.},
  journal={IFAC-PapersOnLine},
  volume={55},
  number={30},
  pages={55--60},
  year={2022},
  publisher={Elsevier}
}

@book{KunM24,
 author     = {P. Kunkel and V. Mehrmann},
 title      = {Differential-Algebraic Equations.
Analysis and Numerical Solution},
 publisher  = {EMS Press},
 address    = {Berlin, Germany},
 edition    = {2nd},
 year       = {2024},
}

@Book{Meh91,
  Author =	 {V. {Mehrmann}},
  Title =	 {The Autonomous Linear Quadratic Control
                  Problem. {T}heory and Numerical Solution},
  ISBN =	 {3-540-54170-5/pbk},
  Pages =	 177,
  year =	 1991,
  Publisher =	 {Berlin etc.: Springer-Verlag},
  series =	 {Lecture Notes in Control and Information Sciences},
  volume =	 163
}

@article{KunMeh08,
    author = {P. Kunkel and V. Mehrmann},
    
    year = {2008},
    
    title = {Optimal control for unstructured nonlinear differential-algebraic equations of arbitrary index},
    
    journal = MCSS,
  
    VOLUME = "20",
  
    PAGES = "227 -- 269"
}

@inproceedings{MehM19_CDC,
  author =	 {V. Mehrmann and R. Morandin},
  title =	 {Structure-preserving discretization for port-{H}amiltonian descriptor systems},
  booktitle =	 {58th IEEE Conference on Decision and Control (CDC), 9.-12.12.19, Nice},
  pages =	 {6863--6868},
  publisher =	 {IEEE},
  year =	 2019,
}

@article{DogKLST23,
  title={Optimal control for port-{H}amiltonian systems and a new perspective on dynamic network flow problems},
  author={Doganay, O.T. and Klamroth, K. and Lang, B. and Stiglmayr, M. and Totzeck, C.},
  journal={arXiv preprint arXiv:2303.15082},
  year={2023}
}

@article{FauMPSW22,
	author = {Faulwasser, T. and Maschke, B. and Philipp, F. and Schaller, M. and Worthmann, K.},
	date-added = {2023-09-15 12:09:51 -0300},
	date-modified = {2023-09-15 12:09:51 -0300},
	journal = {SIAM J. Control Optim.},
	number = {4},
	pages = {2132--2158},
	publisher = {SIAM},
	title = {Optimal control of port-{H}amiltonian descriptor systems with minimal energy supply},
	volume = {60},
	year = {2022}}

@article{SchPFWM23,
	author = {Schaller, M. and Philipp, F. and Faulwasser, T. and Worthmann, K. and Maschke, B.},
	date-modified = {2023-08-10 14:33:35 -0300},
	journal = {European J. Control},
	publisher = {Elsevier},
volume={60},
pages={33--40},
	title = {Control of port-{H}amiltonian systems with minimal energy supply},
	year = {2023}}

@article{Aronna2018,
	author = {Aronna, M.S.},
	journal = {Discrete Contin. Dyn. Syst. - S},
	number = {6},
	pages = {1179-1199},
	title = {Second order necessary and sufficient optimality conditions for singular solutions of partially-affine control problems},
	volume = {11},
	year = {2018}}

@book{schattler2012geometric,
	author = {Sch{\"a}ttler, H. and Ledzewicz, U.},
	publisher = {Springer Science \& Business Media},
	title = {Geometric Optimal Control: Theory, Methods and Examples},
	volume = {38},
	year = {2012}}

@article{ABDL12,
	author = {Aronna, M.S. and Bonnans, J.F. and Dmitruk, A.V. and Lotito, P.A.},
	date-added = {2014-12-06 23:14:42 +0000},
	date-modified = {2014-12-06 23:14:42 +0000},
	journal = {Numer. {A}lgebra, {C}ontrol {O}ptim., AIMS Journal, special issue dedicated H. {M}aurer on the occasion of his 65th birthday},
	number = {3},
	pages = {511-546},
	title = {Quadratic order conditions for bang-singular extremals},
	volume = {2},
	year = {2012}}

@book{Pontryagin,
	author = {L. Pontryagin and V. Boltyanski and R. Gamkrelidze and E. Michtchenko},
	date-added = {2014-12-05 02:55:28 +0000},
	date-modified = {2014-12-05 02:55:28 +0000},
	publisher = {Wiley Interscience, New York},
	title = {The Mathematical Theory of Optimal Processes},
	year = {1962}}

@article{Rob67,
	author = {H.M. Robbins},
	date-added = {2014-11-27 14:18:45 +0000},
	date-modified = {2014-11-27 14:18:45 +0000},
	journal = {IBM J. of Research and Development},
	pages = {361-372},
	title = {A generalized {L}egendre-{C}lebsch condition for the singular case of optimal control},
	volume = {11},
	year = {1967}}

@incollection{Knobloch80,
	affiliation = {Mathematisches Institut, Am Hubland D-8700 W{\"u}rzburg Fed.Rep.Germany D-8700 W{\"u}rzburg Fed.Rep.Germany},
	author = {Knobloch, H.},
	booktitle = {Ordinary and Partial Differential Equations},
	date-added = {2014-11-23 23:26:34 +0000},
	date-modified = {2014-11-23 23:26:34 +0000},
	editor = {Everitt, W.},
	isbn = {978-3-540-10252-6},
	keyword = {Mathematics and Statistics},
	note = {10.1007/BFb0091379},
	pages = {151-164},
	publisher = {Springer Berlin / Heidelberg},
	series = {Lecture Notes in Mathematics},
	title = {Higher order necessary conditions in optimal control theory},
	url = {http://dx.doi.org/10.1007/BFb0091379},
	volume = {827},
	year = {1980},
	bdsk-url-1 = {http://dx.doi.org/10.1007/BFb0091379}}

@article{Goh66,
	author = {B.S. Goh},
	date-added = {2014-11-23 19:17:11 +0000},
	date-modified = {2014-11-23 19:17:11 +0000},
	journal = {J. SIAM Control},
	pages = {716--731},
	title = {Necessary Conditions for Singular extremals involving multiple control variables},
	volume = {4},
	year = {1966}}

@article{FraTon13,
	author = {Frankowska, H. and Tonon, D.},
	fjournal = {SIAM Journal on Control and Optimization},
	journal = {SIAM J. Control Optim.},
	number = {5},
	pages = {3814--3843},
	title = {Pointwise second-order necessary optimality conditions for the {M}ayer problem with control constraints},
	volume = {51},
	year = {2013},
	bdsk-file-1 = {YnBsaXN0MDDSAQIDBFxyZWxhdGl2ZVBhdGhZYWxpYXNEYXRhXxAqLi4vUGluaU5CL0FydGljbGVzL0ZyYW5rb3dza2EvRnJhVG9uMTMucGRmTxEBdgAAAAABdgACAAAMTWFjaW50b3NoIEhEAAAAAAAAAAAAAAAAAAAAAAAAAEJEAAH/////DEZyYVRvbjEzLnBkZgAAAAAAAAAAAAAAAAAAAAAAAAAAAAAAAAAAAAAAAAAAAAAAAAAAAAAAAAAAAAAAAAAAAP////8AAAAAUERGIENBUk8AAQAEAAAKIGN1AAAAAAAAAAAAAAAAAApGcmFua293c2thAAIAPy86VXNlcnM6c29sZWRhZDpEcm9wYm94OlBpbmlOQjpBcnRpY2xlczpGcmFua293c2thOkZyYVRvbjEzLnBkZgAADgAaAAwARgByAGEAVABvAG4AMQAzAC4AcABkAGYADwAaAAwATQBhAGMAaQBuAHQAbwBzAGgAIABIAEQAEgA9VXNlcnMvc29sZWRhZC9Ecm9wYm94L1BpbmlOQi9BcnRpY2xlcy9GcmFua293c2thL0ZyYVRvbjEzLnBkZgAAEwABLwAAFQACAA7//wAAAAgADQAaACQAUQAAAAAAAAIBAAAAAAAAAAUAAAAAAAAAAAAAAAAAAAHL}}

@article{Krener1977,
	author = {A.J. Krener},
	date-modified = {2022-01-05 19:21:08 +0100},
	journal = {SIAM J. on Control},
	number = {2},
	pages = {256-293},
	title = {The high order maximal principle and its application to singular extremals},
	volume = {15},
	year = {1977}}

@article{SchZBSTW24,
  title={Energy-optimal control of adaptive structures},
  author={Schaller, M. and Zeller, A. and B{\"o}hm, M. and Sawodny, O. and Tar{\'\i}n, C. and Worthmann, K.},
  journal={at-Automatisierungstechnik},
  volume={72},
  number={2},
  pages={107--119},
  year={2024},
  publisher={De Gruyter}
}

\end{document}